\makeatletter \@addtoreset{equation}{section} \makeatother
\renewcommand\thetable{\thesection.\@arabic\c@table}
\theoremstyle{plain}
\newtheorem{maintheorem}{Theorem}
\newtheorem{maincorollary}{Corollary}
\newtheorem{proposition}{Proposition}[section]
\newtheorem{lemma}{Lemma}[section]
\newtheorem{corollary}{Corollary}[section]
\newtheorem{definition}{Definition}[section]
\newtheorem{remark}{Remark}[section]
\newcommand{\vep}{\varepsilon}
\newcommand{\supp}{\operatorname{supp}}
\newcommand{\cM}{\mathcal{M}}
\newcommand{\cU}{\mathcal{U}}
\newcounter{main}
\title[Uniform hyperbolicity revisited: 
index of periodic points and equidimensional cycles]
{Uniform hyperbolicity revisited: 
index of periodic points and equidimensional cycles}
\author[M\'ario Bessa]{M\'ario Bessa}
\address{M\'ario Bessa, Departamento de Matem\'atica, Universidade da Beira Interior, Rua Marqu\^es d'\'Avila e Bolama,
  6201-001 Covilh\~a
Portugal.}
\email{bessa@ubi.pt}
\author[Jorge Rocha]{Jorge Rocha}
\address{Jorge Rocha, Departamento de Matem\'atica, Universidade do Porto, 
Rua do Campo Alegre, 687, 
4169-007 Porto, Portugal}
\email{jrocha@fc.up.pt}
\author[Paulo Varandas]{Paulo Varandas}
\address{Paulo Varandas, Departamento de Matem\'atica, Universidade Federal da Bahia\\
Av. Ademar de Barros s/n, 40170-110 Salvador, Brazil $\&$ CMUP, University of Porto -Portugal}
\email{paulo.varandas@ufba.br}
\begin{document}

\begin{abstract}
In this paper we revisit uniformly hyperbolic basic sets and the domination of Oseledets splittings at periodic points. 
We prove that periodic points with simple Lyapunov spectrum
are dense in non-trivial basic pieces of $C^r$-residual diffeomorphisms on three-dimensional manifolds ($r\ge 1$). 
In the case of the $C^1$-topology we can prove that either all periodic points of a hyperbolic basic piece for a diffeomorphism $f$ have simple spectrum $C^1$-robustly (in which case $f$ has a finest dominated splitting into one-dimensional sub-bundles and all Lyapunov exponent functions of $f$ are continuous in the weak$^*$-topology)
or it can be $C^1$-approximated by an equidimensional cycle associated to periodic points with robust different 
signatures. The later can be used as a mechanism to guarantee the coexistence of infinitely many periodic points
with different signatures.
\end{abstract}

\keywords{Uniform hyperbolicity; periodic points; finest dominated splitting; Oseledets splitting; Lyapunov exponents}
\date{\today}
\maketitle

%%%%%%%%%%%%%%%%%%%%%%%%%%%%
\section{Introduction}

A global view of dynamical systems has been one of the leading problems considered by the dynamical
systems community. Based on the pioneering works of Peixoto and Smale, a conjecture proposed by Palis
in the nineties has constituted a route guide for a global description of the space of dynamical systems.
This program, that roughly describes complement of uniform hyperbolicity as the space of diffeomorphisms
that are approximated by those exhibiting either homoclinic tangencies or heteroclinic cycles, has been 
completed  with much success in the $C^1$-topology, where perturbation tools like the Pugh closing lemma, Franks' lemma, Hayashi's connecting lemma or Ma\~n\'e ergodic closing lemma developed for the characterization of structural stability are available (see e.g.~\cite{Ma1, Hay, Palis} and references therein).

Although the uniform geometric structures of invariant manifolds present in hyperbolic diffeomorphisms are a basic ingredient in achieving several core results and are quite well established, some important questions
on the regularity of finer dynamical properties still remain to be answer.

Motivated by the analysis of the regularity of the Lyapunov exponents similarly to the proof of the stability conjecture, 
periodic orbits and their eigenvalues should play a key role. 
We say that a periodic point $p$ of period $\pi(p) \ge 1$ for a diffeomorphism $f$ has \emph{simple spectrum} if all the eigenvalues of $Df^{\pi(p)}(p)$ are real and distinct. Since all periodic points for an Axiom A diffeomorphism, in the same basic piece, have the same index and are homoclinically 
related, then we will  attribute them a signature, which consists of an ordered list of the dimensions of their finest dominated splittings. Moreover, we will say that two periodic points $p,q$ have different signatures if the Oseledets splittings for $Df^{\pi(p)}(p)$ and $Df^{\pi(q)}(q)$, which consist of the finest dominated splittings at the points, 
are distinct.
Our purpose in this paper is to revisit uniformly hyperbolic basic and explore the notion of domination of 
Oseledets splittings and characterize the continuity points of Lyapunov exponent functions in this context. 
These results lie ultimately in the analysis of the eigenvalues and the finest dominated splitting over the periodic points.

In the first part of the paper (see Theorem~\ref{teo1}) we prove that periodic points with simple Lyapunov spectrum
are dense in non-trivial basic pieces of $C^r$-residual diffeomorphisms on three-dimensional manifolds ($r\ge 1$). 
In particular, any diffeomorphism with a hyperbolic basic piece can be $C^r$-approximated by a diffeomorphism 
with a dense set of periodic points in the continuation of the hyperbolic set. 
The proof of this result explores the
hyperbolic structure of hyperbolic basic pieces, hence it does not rely on the classical $C^1$-perturbation lemmas. 

In the case of the $C^1$-topology, one can describe the mechanisms to deduce simple Lyapunov spectrum 
for all invariant measures assuming  the same property at periodic points, in the spirit of the previous works ~\cite{BGV,Cao,Ca,ST}. On the one hand we prove that, if all periodic points of 
a hyperbolic basic piece for a diffeomorphism $f$ have simple spectrum and the same property holds in a $C^1$
neighborhood of $f$ (c.f. Definition~\ref{def:simplestar}) then $f$ has a finest dominated splitting into
one-dimensional sub-bundles (see Theorem~\ref{thm:1FDS}). 
In this case all Lyapunov exponent functions of $f$ 
are continuous in the weak$^*$-topology. 
On the other hand, if periodic points miss to have simple spectrum robustly, then, by a $C^1$-arbitrarily small perturbation, one can create an equidimensional cycle associated to a pair of periodic points which 
robustly exhibit different signatures (see Theorem~\ref{thm:dichotomy}). 
Finally, we justify that these equidimensional cycles associated to periodic points with different signatures
play a similar role to the one of tangencies and heterodimensional cycles by its instability character. In fact, 
the existence of homoclinic tangencies or heterodimensional cycles is often associated to the so-called
Newhouse phenomenon of persistence of infinitely many sources or sinks 
(see e.g.~\cite{Ne} and references therein).  In this setting, the existence
of a equidimensional cycle associated to periodic points with different signatures can be used to generate (by perturbation) infinitely many periodic points with any of the signatures of the 
generating periodic orbits (see Theorem~\ref{thm:explo}). 
Using the existence of Markov partitions we deduce 
that similar results hold in the time-continuous setting (see Corollary~\ref{cor:flows}).

%%%%%%%%%%%%%%%%%%%%%%%%%%%%
\section{Preliminaries and statement of the main results}

%%%%%%%%%%%%%%%%%%%%%%%%%%%%
\subsection{Hyperbolic, Oseledets and finest dominated splittings}

Let $M$ be a compact Riemannian manifold $M$ and $f\in\text{Diff}^{\, 1}(M)$.
Let ${\text{Per}(f)}$ denote the set of periodic points for $f$ and $\Omega(f) \subset M$ denote the non-wandering set of $f$. Given an $f$-invariant compact set $\Lambda\subseteq M$ we say that $\Lambda$ is a \emph{uniformly hyperbolic set} if there is a $Df$-invariant splitting  $T_{\Lambda} M = E^s \oplus E^u$ and constants $C>0$ and $\lambda\in (0,1)$ so that 
$$
\| Df^n(x)\mid_{E^s_x} \| \le C \lambda^n 
	\quad\text{and}\quad
	\| (Df^n(x)\mid_{E^u_x})^{-1} \| \le C \lambda^n 
$$
for every $x\in \Lambda$ and $n\ge 1$. We can always change the metric in order to obtain $C=1$. We refer to $T_{\Lambda} M = E^s \oplus E^u$ as the \emph{hyperbolic splitting} associated to $f$.
We say that $\Lambda$ is an \emph{isolated set} if there exists an open neighborhood $U\supset \Lambda$ such that $\Lambda=\cap_{n\in\mathbb{N}}f^n(U)$. Finally, we say that $\Lambda$ is \emph{transitive} if there exists $x\in\Lambda$ such that the $f$-orbit of $x$ is dense in $\Lambda$. Along the paper unless stated otherwise we always assume that $\Lambda$ is a uniformly hyperbolic, isolated and transitive set.  For simplicity we call such a set a \emph{hyperbolic basic piece} or simply a \emph{basic piece}. It is well-know that hyperbolic sets admit analytic continuations, that is, there exists a $C^1$-neighborhood 
$\mathcal{U}$ of $f$ and an open neighborhood $U$ of $\Lambda$ so that $\Lambda_g:=\cap_{n\in\mathbb{N}}g^n(U)$ is a basic piece for $g$ and $f|_{\Lambda}$ is topologically conjugated to $g|_{\Lambda_g}$. Finally, we say that a hyperbolic basic piece is \emph{non-trivial} if it does not consist of a hyperbolic periodic point.

We say that a $C^1$-diffeomorphism $f$ is \emph{Axiom A} if $\overline{\text{Per}(f)}=\Omega(f)$ and $\Omega(f)$ is a uniformly hyperbolic set. We observe that the non-wandering set of an Axiom A diffeomorphism can be decomposed in a finite number of basic pieces. 
If $\mu$ is an $f$-invariant probability measure, then it follows from Oseledets' theorem~\cite{O} 
that for $\mu$-almost every $x$ there exists a decomposition (\emph{Oseledets splitting})
$T_x M= E^1_x \oplus E^2_x\oplus \dots \oplus E_{x}^{k(x)}$ and, for $1\le i\le k(x)$, 
there are well defined real numbers
$$
\lambda_i(f,x)= \lim_{n\to\pm \infty} \frac1n \log \|Df^n(x) v_i\|,
	\quad \forall v_i \in E^i_x\setminus \{\vec0\}
$$
called the \emph{Lyapunov exponents} associated to $f$ and $x$. It is well known that, if $\mu$ is 
ergodic, then the Lyapunov exponents are almost everywhere constant and $k(x)=k$ is constant. In this case the Lyapunov exponents are denoted simply by 
$\lambda_i(f,\mu)$. The \emph{Lyapunov spectrum} of a probability measure is the collection of all 
its Lyapunov exponents. If $\Lambda$ is a hyperbolic set for $f$ and $\mu$ is an $f$-invariant and ergodic probability measure supported on $\Lambda$, 
Poincar\'e recurrence theorem implies that $\supp(\mu) \subset \Omega(f)$ and, consequently, the decomposition 
$T_x M= E^1_x \oplus E^2_x\oplus \dots \oplus E_{x}^{k}$ is finer than $T_xM=E^s_x \oplus E^u_x$.
In particular $\mu$ is hyperbolic i.e. has only non-zero Lyapunov exponents.

\begin{definition}
Given a $Df$-invariant decomposition $F=E^1\oplus E^2 \subset TM$ over a basic piece $\Lambda$ of a diffeomorphism $f$ we say $E^1$ is dominated by $E^2$ if there exists $C>0$ and $\lambda \in (0,1)$ so that 
$$
\| Df^n(x) \mid_{E_x^1}\| . \| (Df^n (x)\mid_{E^2_x})^{-1}\| \le C \lambda^n
\text{ for every $n\ge 1$ and $x\in \Lambda$.}
$$
\end{definition}

It is known that we can change the metric in order to obtain $C=1$. It is clear from the definition that any hyperbolic splitting is necessarily a dominated splitting.

\begin{definition}\label{def:simple}
We say that $f$  has a \emph{one-dimensional finest dominated splitting} over a basic piece $\Lambda$ 
if there exists a continuous $Df$-invariant decomposition $T_x M= E^1_x \oplus E^2_x\oplus \dots \oplus E_{x}^{\dim M}$  in one-dimensional subspaces at every $x\in \Lambda$ such that $E^i_x$ is dominated by 
$E^{i+1}_x$  for every $i=1 \dots \dim M -1$.
\end{definition}

It is clear from the previous definition that having a one-dimensional finest dominated splitting is a
$C^1$-open condition for $C^r$-diffeomorphisms $(r\ge 1)$.
Since dominated splittings vary continuously with the base point on a basic set $\Lambda$, if the diffeomorphism $f$ has a one-dimensional finest dominated splitting
$T_{\Lambda} M= E^1 \oplus \dots \oplus E^{\dim M}$ then there exists a continuous function
$$
\Lambda \ni x \mapsto (v_x^1, v_x^2, \dots, v_x^{\dim M}) \in (T_x^1M)^{\dim M}
$$
such that $v_x^i\in E^i_x$ for every $1\le i \le \dim M$ and $x\in \Lambda$ and, consequently, the limit superior 
\begin{align}\label{eq:poitwise}
\overline\lambda_i(f,x) & := \limsup_{n\to\infty} \frac1n \log \|Df^n(x) v_i\| 
\end{align}
is everywhere well defined and it does not depend on the choice of the vector $v_i\in E^i_x \setminus\{0\}$.
Defining $S: T^1_\Lambda M \to T^1_\Lambda M$ and $\phi: T^1_\Lambda M \to \mathbb R$ by 
$$
S(x,v) =\left(f(x), \frac{Df(x) v}{\| Df(x) v\|}\right)
	\quad\text{and}\quad
	\phi(x,v) = \log \|Df(x) v\|
$$
then it follows that each value $\overline\lambda_i(f,x)$ can be written as
\begin{align}\label{eq:poitwise2}
\overline\lambda_i(f,x) 
	 = \limsup_{n\to\infty} \frac1n \sum_{j=0}^{n-1} \phi( S^j(x, v_i^x)).
\end{align}
Using ~\eqref{eq:poitwise2}, for any $f$-invariant and ergodic probability measure $\mu$ each of the values $\overline\lambda_i(f,x)$ coincide  $\mu$-almost everywhere coincide with the Lyapunov exponents $\lambda_i(f,\mu)$, for every 
$1\le i \le \dim M$. More generally, for any $f$-invariant probability measure $\mu$  we define their \emph{Lyapunov 
exponents} by 
\begin{equation}\label{eq:poitwise2}
\lambda_i(f,\mu):= \int \overline \lambda_i(f,x) \, d\mu(x)
\end{equation}
for every $1\le i \le \dim M$. 

\begin{remark}
If $\mu$ is an $f$-invariant probability measure, the ergodic decomposition theorem guarantees that $\mu=\int\mu_x\,d\mu(x)$ where each $\mu_x$ is an $f$-invariant and ergodic probability measure defined for
$\mu$-almost every $x\in\Lambda $, and $\frac1n\sum_{j=0}^{n-1} \delta_{f^j(x)}$ converges to $\mu_x$ in the weak$^*$-topology. 
Thus for $\mu$-almost every $x\in\Lambda$ and every $1\le i \le \dim M$, 
$$
\overline \lambda_i(f,x)=\lambda_i(f,\mu_x) = \int \phi(y,v_i^y) \, d\mu_x(y).
$$ 
Equality in \eqref{eq:poitwise2} can also be written as
\begin{equation}\label{eq:poitwise3}
\lambda_i(f,\mu)
	= \int \overline \lambda_i(f,x) \, d\mu(x) 
	= \int \lambda_i(f,\mu_x) \,d\mu(x)
	= \int \phi(x,v_i^x) \, d\mu(x)
\end{equation}
for every $1\le i \le \dim M$.
\end{remark}

%%%%%%%
\subsection{Statement of the main results}

%%%%%%%
\subsubsection{Plenty of periodic orbits with simple spectrum on hyperbolic basic sets of $C^r$-diffeomorphisms}

In this subsection we discuss the simplicity of the Lyapunov spectrum for $C^r$-diffeomorphisms
($r\ge 1$) restricted to hyperbolic basic pieces. 

\begin{maintheorem}\label{teo1}
Assume $\dim M=3$ and $r\ge 1$. Let $f$ be a $C^r$-diffeomorphism and $\Lambda$ be a non-trivial hyperbolic basic set for $f$ and let $\cU$ be a $C^r$-open neighborhood of $f$ so that the hyperbolic continuation $\Lambda_g$ is well defined
for all $g\in \cU$. Then, there exists a $C^r$-residual subset $\mathcal{R} \subset \cU$ such that if $g\in\mathcal{R}$ 
then the set of periodic orbits with simple spectrum is dense in $\Lambda_g$. 
\end{maintheorem}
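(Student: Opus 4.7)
The plan is a standard Baire category argument whose main work lies in proving density. I fix a countable basis $\{W_n\}_{n \ge 1}$ of the topology of $M$ (taken with enough regularity that the boundary technicalities below are inessential) and set
\[
\mathcal{B}_n = \{g \in \mathcal{U} : \Lambda_g \cap W_n = \emptyset\} \cup \{g \in \mathcal{U} : \exists\,p \in \text{Per}(g) \cap \Lambda_g \cap W_n \text{ with simple spectrum}\}.
\]
Each $\mathcal{B}_n$ is $C^r$-open: the first set is open by upper semicontinuity of the hyperbolic continuation $g \mapsto \Lambda_g$, and the second is open because hyperbolic periodic points have a $C^1$-continuous continuation and ``all eigenvalues real and distinct'' is a $C^0$-open condition on the derivative at the periodic point. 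Once density is established, $\mathcal{R} = \bigcap_n \mathcal{B}_n$ is residual, for then any $g \in \mathcal{R}$ and any open $V$ meeting $\Lambda_g$ admit some $W_n \subset V$ meeting $\Lambda_g$; membership in $\mathcal{B}_n$ forces the second alternative and produces a simple-spectrum periodic point in $V$.

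For density I fix $g_0 \in \mathcal{U}$ with $\Lambda_{g_0} \cap W_n \ne \emptyset$ and seek $\tilde g$ arbitrarily $C^r$-close to $g_0$ in $\mathcal{B}_n$. Density of periodic orbits in a hyperbolic basic piece supplies $p \in \text{Per}(g_0) \cap \Lambda_{g_0} \cap W_n$. Since $\dim M = 3$, assume without loss of generality that $\dim E^u = 2$ and $\dim E^s = 1$; the $E^s$-eigenvalue is automatically real, so simple spectrum at $p$ is equivalent to the $2 \times 2$ block $A := Dg_0^{\pi(p)}(p)|_{E^u_p}$ having two real distinct eigenvalues. If $A$ has repeated real eigenvalues (a codimension-one condition), composing $g_0$ with a $C^r$-bump supported in a small ball around a point of the orbit of $p$, with near-identity differential at that point, changes $A$ by an arbitrarily small amount in any prescribed direction; this lets me push $A$ across the discriminant locus into the region of real distinct eigenvalues. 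For $r > 1$ I must balance the bump radius $\delta$ against the amplitude of the linear correction (since higher derivatives of the bump scale as $\delta^{-k}$), but only an infinitesimal change to $A$ is required, so the $C^r$-perturbation can be made arbitrarily small.

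The delicate case is when $A$ has complex eigenvalues $\mu e^{\pm i\theta}$, which is a $C^1$-robust property at $p$. Here I produce a different simple-spectrum periodic orbit inside $W_n$ by shadowing, exploiting the hyperbolic structure of $\Lambda_{g_0}$. By transitivity, $p$ has transverse homoclinic points; fix one, $z$. Smale's theorem yields for each $k \ge 1$ a periodic point $q_k$ whose orbit shadows $k$ laps along the orbit of $p$ followed by a homoclinic excursion through $z$, and the $\lambda$-lemma places $q_k \in W_n$ for $k$ large. Up to holonomies and a small shadowing error, the return derivative of $q_k$ on the two-dimensional bundle is conjugate to $A^k B$ for a fixed $2 \times 2$ matrix $B$ associated with the homoclinic excursion. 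In an adapted basis where $A = \mu R_\theta$, the discriminant of the characteristic polynomial of $A^k B$ equals
\[
\mu^{2k}\bigl[\,r^2 \cos^2(k\theta - \phi_0) - 4\det B\,\bigr], \qquad r^2 = (\mathrm{tr}\,B)^2 + (b_{12} - b_{21})^2 ,
\]
and under the open condition $r^2 > 4\det B$ infinitely many $k$ give positive discriminant and thus a simple-spectrum $q_k$. If the original $B$ fails this condition, I perform a $C^r$-small perturbation of $g_0$ supported along the homoclinic excursion, disjoint from the orbit of $p$, moving $B$ into the open region $\{r^2 > 4\det B\}$.

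The main obstacle is precisely this last step: moving $B$ into the open region may require an $O(1)$ change, which cannot be realized by a single $C^r$-small bump — exactly the point where the $C^1$ setting would invoke Franks' lemma. The resolution is the one highlighted in the introduction, namely relying on the hyperbolic structure of $\Lambda_{g_0}$ rather than $C^1$-perturbation lemmas: the basic piece contains infinitely many transverse homoclinic orbits of $p$ with arbitrarily long itineraries, so either some existing homoclinic loop already produces a $B$ in the open region (and no perturbation is needed), or I distribute small per-step perturbations along a sufficiently long loop, their cumulative effect on $B$ being macroscopic while the total $C^r$-norm of the perturbation remains as small as desired.
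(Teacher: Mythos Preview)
Your Baire-category scaffolding and the treatment of the repeated-real-eigenvalue case are fine and parallel the paper. The genuine gap is in the complex-eigenvalue case, exactly where you flag it. Your model $A^kB$ and the discriminant computation are correct, and the obstruction $r^2\le 4\det B$ is real: in that regime no choice of $k$ yields real eigenvalues, so something must be perturbed. But neither of your two proposed resolutions is a proof. Option (a), that some existing homoclinic loop already gives a good $B$, is an assertion with no argument behind it; nothing in the hypotheses prevents every excursion matrix from sitting on the wrong side. Option (b) is essentially a Franks-lemma manoeuvre, and the entire difficulty of the $C^r$ case is that Franks' lemma is unavailable for $r>1$. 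If you spread bumps along a long loop, either the supports overlap (and the $C^r$-norm is not controlled) or they must shrink with the separation of the orbit points, in which case Lemma~\ref{pert} forces the allowed change to each derivative to shrink as well. Even granting disjoint fixed-radius supports via a clever itinerary, you would still need to align the many small perturbations so that their effect on the long product $B$ moves the discriminant in the right direction; none of this is carried out.

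The paper's mechanism is different and is the missing idea. One makes a \emph{single} $C^r$-small perturbation supported in a fixed ball $B(p,R)$, along a continuous arc $f_t=h_{t\xi}\circ f$ where $Dh_{t\xi}(p)$ is the rotation $R_{t\xi}$ in the unstable plane; $\xi$ is fixed once and for all by $R$ and $\varepsilon$ through Lemma~\ref{pert}. The specification orbits $x_m$ spend their first $m$ iterates near $p$, so the return cocycle $Df_t^{\pi(x_m)}(x_m)|_{E^u}$ factors as a product of $m$ matrices each uniformly close to $\gamma R_{\theta+t\xi}$, times a bounded tail $\beta_t$. One then tracks the \emph{rotation number} of this product on $\mathbb{RP}^1$: as $t$ runs over $[0,1]$ it varies by roughly $m\xi/\pi$, hence by more than $1$ once $m$ is large (this is the adaptation of \cite[Lemma~9.3]{BoV04}). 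Since the lifted rotation number is continuous in $t$ and integer rotation number corresponds to real eigenvalues, some $t$ forces real eigenvalues; a further tiny diagonal perturbation makes them distinct. The crucial point you are missing is that the accumulation comes not from summing many independent perturbations along the orbit, but from the orbit \emph{revisiting the support of one fixed-size perturbation} many times; this is what sidesteps the $C^r$ obstacle.
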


Some comments are in order.
In the case that $r=1$ there are some perturbation tools sufficient to guarantee the previous result
(e.g. the methods of \cite{BGV} used in the proof of Theorem~\ref{thm:dichotomy}).
In the case that $r>1$, where there is a lack of perturbation tools, we use strongly both the robustness of the 
hyperbolic splitting and the persistence of the hyperbolic basic piece to prove that generic diffeomorphisms 
have a dense set of periodic orbits with simple spectrum on non-trivial basic pieces. 
We also observe that the three-dimensional assumption on the manifold is crucial. 

%%%%%%%%%%%%%%%%%%%%%%%%%%%%
\subsubsection{The simple star property and one-dimensional finest dominated splittings}

In this subsection we provide a criterium over the periodic points for a hyperbolic basic piece to admit a one-dimensional finest dominated splitting.

\begin{definition}\label{def:simplestar}
Given a basic piece $\Lambda$ for a diffeomorphism $f$ we say that $f|_{\Lambda}$ is \emph{simple star} if there exists a $C^1$-open neighborhood 
$\mathcal U$ of $f$ so that, for any $g\in \mathcal U$ all periodic points for $g|_{\Lambda_g}$ have simple spectrum. 
\end{definition}

We observe that after \cite{Ma1}, a $C^1$-diffeomorphism is called star if it admits a $C^1$-open neighborhood formed by
diffeomorphisms whose periodic points are all hyperbolic. Since the non-wandering set is uniformly hyperbolic 
then it is clear that the set of Axiom A diffeomorphisms forms a $C^1$-open subset 
of star diffeomorphisms.
It follows from ~\cite{BGV} that periodic points are
enough to determine dominated splittings at the closure of periodic points. Since periodic points are dense in basic pieces we obtain the following result.

\begin{maintheorem}\label{thm:1FDS}
If $f|_{\Lambda}$ is  simple star, then $f$ admits a one-dimensional finest dominated splitting over $\Lambda$.
Moreover, the Lyapunov spectrum is simple for all $f$-invariant ergodic probability measures and
the map
\begin{equation}\label{LE}
\begin{array}{ccc}
\mathscr{L}:\mathcal M_1(f|_{\Lambda}) & \to & \mathbb R^{\dim M} \\
	\mu & \mapsto & (\lambda_{1}(f,\mu), \dots, \lambda_{\dim M}(f,\mu))
\end{array}
\end{equation}
is continuous, where $\mathcal M_1(f|_\Lambda)$ denotes the space of $f$-invariant probability measures supported in $\Lambda$ and endowed with the weak$^*$ topology. 
\end{maintheorem}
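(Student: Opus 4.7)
The proof breaks into three tasks: (i) produce a one-dimensional finest dominated splitting over $\Lambda$, (ii) deduce simplicity of the Lyapunov spectrum for every ergodic measure from (i), and (iii) derive continuity of $\mathscr{L}$. The substance lies in (i); the other two steps are standard consequences of the existence of a continuous finest dominated splitting. For (i) the plan is to invoke the $C^1$-perturbation tool of Bonatti--Gourmelon--Vivier cited in the excerpt: if no arbitrarily small $C^1$-perturbation of $f$ can merge two eigenvalues at any periodic point of the continuation of $\Lambda$, then one obtains a dominated splitting between the corresponding invariant directions at every periodic orbit, with a uniform domination rate. The simple star hypothesis says exactly that every periodic point of every diffeomorphism in $\cU$ has $\dim M$ distinct real eigenvalues, so this $C^1$-robust ``non-merging'' holds for each consecutive pair of eigenvalues. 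Since dominated splittings vary continuously with the base point and extend to the closure of the set where they are defined, and since periodic orbits are dense in the basic piece $\Lambda$, this yields a continuous $Df$-invariant decomposition $T_\Lambda M = E^1 \oplus E^2 \oplus \cdots \oplus E^{\dim M}$ with $E^i$ dominated by $E^{i+1}$ at every $x\in\Lambda$, which is the desired one-dimensional finest dominated splitting.

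For (ii), fix any $f$-invariant ergodic probability $\mu$ supported in $\Lambda$. Standard Oseledets theory says that the Oseledets decomposition at $\mu$-a.e. point is finer than any continuous $Df$-invariant dominated splitting on $\supp\mu$. Applied to the splitting of step (i), this forces the Oseledets decomposition to coincide with $E^1\oplus\cdots\oplus E^{\dim M}$ on $\supp\mu$, since the $E^i$ are already one-dimensional. Moreover, domination $E^i\prec E^{i+1}$ translates into the strict inequality $\lambda_i(f,\mu)<\lambda_{i+1}(f,\mu)$ of Lyapunov exponents, so the spectrum of $\mu$ is simple.

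Step (iii) then follows from the integral representation \eqref{eq:poitwise3}:
$$
\lambda_i(f,\mu) \;=\; \int_\Lambda \psi_i(x)\,d\mu(x),
\qquad
\psi_i(x)\;:=\;\log \|Df(x)\,v_i^x\|,
$$
where $v_i^x$ is a unit vector in $E^i_x$. Continuity of the finest dominated splitting over the compact set $\Lambda$ gives continuity of each $\psi_i$ on $\Lambda$ (the sign ambiguity in the choice of $v_i^x$ is irrelevant because $\psi_i$ depends only on the line $E^i_x$). Therefore each map $\mu\mapsto \int \psi_i\,d\mu$ is weak$^*$-continuous on $\cM_1(f|_\Lambda)$, and so $\mathscr{L}$ is continuous. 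The main obstacle is, as noted, step (i): converting the pointwise ``no merging of eigenvalues'' information at periodic orbits, quantified $C^1$-uniformly by the simple star hypothesis, into the uniform exponential separation rates required by a genuine dominated splitting. This is exactly what the perturbation machinery of \cite{BGV} provides, and once it is in place the rest of the theorem is essentially bookkeeping.
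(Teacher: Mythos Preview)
Your proposal is correct and follows essentially the same route as the paper's proof: invoke the Bonatti--Gourmelon--Vivier dichotomy to obtain, from the $C^1$-robust simplicity of eigenvalues at periodic orbits, a uniform $m$-dominated splitting into one-dimensional subbundles over the periodic set, then extend it to $\Lambda$ by continuity and density of periodic points; finally read off simplicity of the spectrum and continuity of $\mathscr{L}$ from the integral representation \eqref{eq:poitwise3}. The only cosmetic difference is that the paper organizes step~(i) as a recursion (first split $T_\Lambda M=E\oplus F$, then refine $E$ and $F$ separately, etc.) and is explicit that the BGV dichotomy only applies to periodic orbits of period $\ge n$, using that a non-trivial basic piece has $\overline{\bigcup_{\ell\ge n}\text{Per}_\ell(f)}=\Lambda$; your phrasing ``for each consecutive pair of eigenvalues'' amounts to the same thing but you should make the large-period restriction explicit when you write this out in full.
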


We observe that if the diffeomorphism $f$ has a one-dimensional finest dominated splitting over $\Lambda$,
then every $f$-invariant probability measure supported on $\Lambda$ admits $d=\dim M$ distinct Lyapunov exponents that differ by
a definite amount. Thus, the following is a direct consequence of Theorem~\ref{thm:1FDS}.

\begin{maincorollary}\label{tadinho}
Let $\Lambda$ be a basic piece of a diffeomorphism $f$. The following properties are equivalent:
\begin{itemize}
\item[(a)] $f|_\Lambda$ is simple star; 
\item[(b)] $f|_\Lambda$ admits a one-dimensional finest dominated splitting;
\item[(c)] there exists a constant $ c>0$ so that any $f$-invariant probability measure $\mu$ supported in $\Lambda$ 
has $\dim M$ distinct Lyapunov exponents and $\lambda_{i+1}(f,\mu)- \lambda_{i}(f,\mu)\ge c >0$ for 
every  $i=1,\dots, \dim M-1$.
\end{itemize}
\end{maincorollary}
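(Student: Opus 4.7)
The plan is to close the cycle (a) $\Rightarrow$ (b) $\Rightarrow$ (c) $\Rightarrow$ (b) $\Rightarrow$ (a). Theorem~\ref{thm:1FDS} already delivers (a) $\Rightarrow$ (b), and (b) $\Rightarrow$ (a) will be obtained as a short consequence of the $C^1$-openness of dominated splittings; hence the heart of the matter is the equivalence (b) $\Leftrightarrow$ (c).

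The arrow (b) $\Rightarrow$ (c) is a direct computation. Writing $d = \dim M$ and $T_\Lambda M = E^1 \oplus \cdots \oplus E^d$, for each $1 \le i \le d-1$ domination of $E^i$ by $E^{i+1}$ yields constants $C_i > 0$ and $\lambda_i \in (0,1)$ with $\|Df^n|_{E^i_x}\| \cdot \|(Df^n|_{E^{i+1}_x})^{-1}\| \le C_i \lambda_i^n$ on $\Lambda$. Since each sub-bundle is one-dimensional, this translates into $\|Df^n v_x^i\| \le C_i \lambda_i^n \|Df^n v_x^{i+1}\|$ for unit vectors $v_x^j \in E_x^j$. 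Applying $\frac1n \log$ and $\limsup$ gives $\overline\lambda_{i+1}(f,x) - \overline\lambda_i(f,x) \ge -\log\lambda_i > 0$ pointwise on $\Lambda$, and integrating against an arbitrary $f$-invariant probability $\mu$ via~\eqref{eq:poitwise3} yields the uniform gap $c := \min_i (-\log \lambda_i) > 0$ required by (c).

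The reverse arrow (c) $\Rightarrow$ (b) is where the actual work lies, and I would invoke the classical criterion — traceable to Ma\~n\'e and in the same circle of ideas as~\cite{BGV, Ca, ST} — that a uniform spectral gap of index $i$ valid for every ergodic invariant measure on a compact invariant set forces a dominated decomposition of index $i$. Applying this for each $1 \le i \le d-1$ and refining the resulting nested decompositions produces the one-dimensional finest dominated splitting. Finally, (b) $\Rightarrow$ (a) is obtained by openness: as noted after Definition~\ref{def:simple}, having a one-dimensional finest dominated splitting is a $C^1$-open condition, so every $g$ in a small $C^1$-neighborhood $\mathcal U$ of $f$ inherits (b), and hence (c) by the previous step. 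At a periodic point $p$ of period $\pi(p)$ for $g|_{\Lambda_g}$ the Lyapunov exponents are $\pi(p)^{-1}\log|\mu_j|$ with $\mu_j$ the eigenvalues of $Dg^{\pi(p)}(p)$; distinctness of the $d$ exponents forces $d$ distinct moduli, ruling out both complex conjugate pairs and real pairs $\pm\mu$, so all eigenvalues are real and distinct and $p$ has simple spectrum; thus $f|_\Lambda$ is simple star.

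The main obstacle is (c) $\Rightarrow$ (b): passing from a purely measure-theoretic spectral gap to an actual geometric dominated splitting requires the compactness and variational arguments packaged into the classical criterion, and there appears to be no shortcut ``(c) $\Rightarrow$ (a) directly'' since property (c) carries no intrinsic $C^1$-robustness independent of first producing the splitting.
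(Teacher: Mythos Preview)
Your proposal is correct and essentially coincides with what the paper intends. The paper offers no separate proof of this corollary, declaring it a direct consequence of Theorem~\ref{thm:1FDS} together with the observation immediately preceding the statement (that a one-dimensional finest dominated splitting forces a uniform spectral gap); your cycle (a) $\Rightarrow$ (b) $\Rightarrow$ (c) $\Rightarrow$ (b) $\Rightarrow$ (a)---with (c) $\Rightarrow$ (b) supplied by the Cao-type criterion in the spirit of~\cite{Cao, Ca, ST} and (b) $\Rightarrow$ (a) via the $C^1$-openness noted after Definition~\ref{def:simple}---is exactly the natural way to unpack what the paper leaves implicit.
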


The later corollary ultimately means that if the Lyapunov spectrum of periodic points simple is $C^1$-robust,
then there exists a uniform gap in the Lyapunov spectrum of periodic points.

%%%%%%%%%%%%%%%%%%%%%%%%%%%%
\subsubsection{A dichotomy for basic pieces}\label{subsec:dichotomy}

In this subsection we obtain a dichotomy of one-dimensional finest dominated splitting \emph{versus} equidimensional
cycles with robust different signatures for basic pieces. 
Let us introduce these notions more precisely. 
Let $\Lambda$ be a basic set of a diffeomorphism $f$ and $p\in\Lambda$ be a hyperbolic periodic point of period $\pi(p)\ge 1$. Given
the finest $Df^{\pi(p)}(p)$-invariant dominated splitting $E^u_p=E^u_1 \oplus \dots \oplus E^u_k$ we define the 
\emph{unstable signature at $p$} (and denote it by $\text{sgn}^u(p)$) to be the $k$-uple $(\dim E^u_1, \dots, \dim E^u_k)$. Stable signatures at periodic points are defined analogously and denoted by $\text{sgn}^s$. Unstable (resp. stable) signatures
describe the existence of finer splittings in the unstable (resp. stable) bundle of periodic points.

\begin{figure}[h]\label{Fig4}
\begin{center}
  \includegraphics[width=12cm,height=2.7cm]{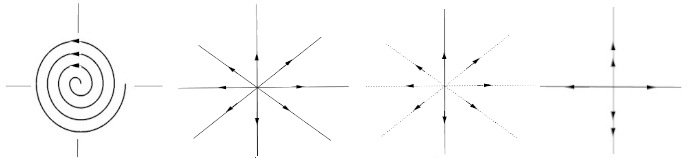}
\caption{\small{Different linear behavior of $Df^{\pi(p)}(p)\mid_{E^u_p}$ in the case of two-dimensional unstable subbundle corresponding to signatures $(2)$, $(2)$, $(2)$ and $(1,1)$, respectively. The third figure from the left corresponds to equal eigenvalues with a non-zero nilpotent part.}}
\end{center}
\end{figure}

\begin{figure}[h]\label{Fig3}
\begin{center}
  \includegraphics[width=10cm,height=2.3cm]{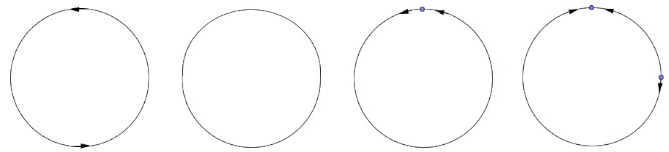}
\caption{Action of $Df^{\pi(p)}(p)\mid_{E^u_p}$ on the projective space in the case of two-dimensional unstable sub-bundle ordered as Figure 1.}
\end{center}
\end{figure}

\begin{definition}\label{def:equidim}
We say that a $C^1$ diffeomorphism $f$ exhibits an \emph{equidimensional cycle with robust different signatures} on a basic piece $\Lambda$ if there are two periodic points $p,q\in \Lambda$ with different stable or unstable signatures, and this property holds under arbitrarily $C^1$-small perturbations. 
\end{definition}

\begin{figure}[h]\label{fig1}
\begin{center}
  \includegraphics[width=8.5cm,height=4cm]{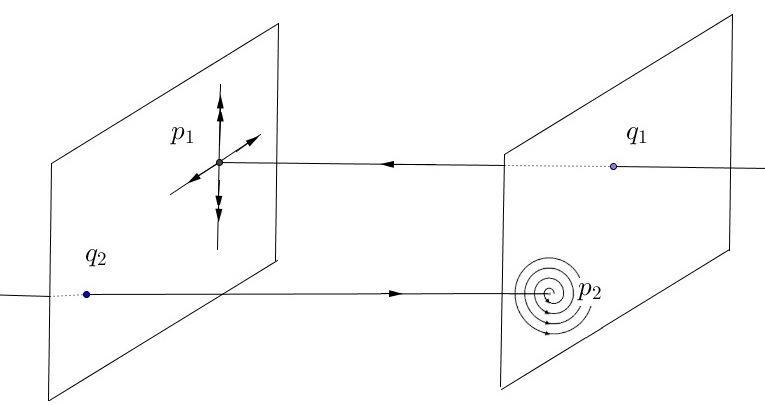}
\caption{Equidimensional cycle between two fixed points $p_1$ and $p_2$ such that $\text{sgn}^u(p_1)=(1,1)$ and $\text{sgn}^u(p_2)=(2)$.}
\end{center}
\end{figure}

In dimension three, the robustness of the different signatures is associated to the existence of a pair of periodic points whose action of the dynamical cocycle on the projective space of one periodic point exhibits a dominated splitting while for the other point it presents a rotational effect (c.f. Figures 1-3).
This is in strong analogy with the lack of domination observed in the presence of homoclinic tangencies.
In the space of $C^1$-diffeomorphisms the presence of homoclinic tangencies and heterodimensional cycles 
appears as a mechanism to create infinitely many sources or sinks (see e.g. the Newhouse phenomenon 
in \cite{Ne}). Roughly, if there exists a homoclinic tangency associated to a dissipative periodic point of 
saddle type then the lack of domination allows to perturb the system to mix the directions and to create 
a sink/source. The infinitely many sinks/sources arise associated to some phenomenon of persistence of tangencies.

\begin{maintheorem}\label{thm:dichotomy}
Assume that $\dim M =3$. Let $\Lambda$ be a non-trivial basic piece of a diffeomorphism $f$ and $\mathcal{U}$ a $C^1$-open neighborhood of $f$ on which the analytic continuation $\Lambda_g$ of $\Lambda$ is well-defined for all $g\in\mathcal{U}$. Either $f|_{\Lambda}$ has a one-dimensional finest dominated splitting, or $f$ can be $C^1$-approximated by diffeomorphisms $g$ that exhibit a 
equidimensional cycle in $\Lambda_g$ with robust different signatures. 
\end{maintheorem}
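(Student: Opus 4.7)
The plan is to argue by contraposition: assuming $f|_\Lambda$ admits no one-dimensional finest dominated splitting, I will construct diffeomorphisms arbitrarily $C^1$-close to $f$ that exhibit an equidimensional cycle on $\Lambda_g$ with robustly different signatures. Since $\dim M=3$ and $T_\Lambda M = E^s\oplus E^u$, exactly one summand is two-dimensional; let $F$ denote this two-dimensional sub-bundle, and let ``signature'' refer to stable or unstable signature accordingly. The hypothesis that $f|_\Lambda$ has no one-dimensional finest dominated splitting translates into the failure of $F$ to decompose as a dominated sum of two one-dimensional $Df$-invariant sub-bundles on $\Lambda$.

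\emph{Step 1: creating a robust signature $(2)$ periodic point.} By the contrapositive of Theorem~\ref{thm:1FDS}, $f|_\Lambda$ cannot be simple star; hence arbitrarily $C^1$-close to $f$ there exist diffeomorphisms $g$ possessing a hyperbolic periodic point $p\in\Lambda_g$ whose spectrum is not simple. Because $\dim F_p=2$, the two eigenvalues of $Dg^{\pi(p)}(p)|_{F_p}$ are either already a complex conjugate pair (which is a $C^1$-open property and so signature $(2)$ at $p$ is automatically robust), or they form a repeated real eigenvalue, possibly with a non-trivial Jordan block. In the latter case, a further application of Franks' lemma distributes an arbitrarily small rotational correction along the orbit of $p$, so that the one-period return map on $F_p$ is replaced by a small conformal-rotation perturbation whose spectrum is complex conjugate. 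Continuity of eigenvalues then makes signature $(2)$ at $p$ persist under all sufficiently small $C^1$-perturbations. Denote the resulting diffeomorphism by $g_1$.

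\emph{Step 2: creating a robust signature $(1,1)$ periodic point disjointly.} Since $\Lambda$ is non-trivial, $\Lambda_{g_1}$ contains infinitely many periodic orbits, so I pick a periodic orbit $\mathcal{O}_q\subset\Lambda_{g_1}$ disjoint from $\mathcal{O}_p$. Applying Franks' lemma along $\mathcal{O}_q$ with support in a small tubular neighborhood of $\mathcal{O}_q$ that avoids $\mathcal{O}_p$, I produce $g_2$ arbitrarily $C^1$-close to $g_1$ such that $Dg_2^{\pi(q)}(q)|_{F_q}$ is diagonalizable with two distinct real eigenvalues $\mu_1,\mu_2$ (of the correct sign, all of absolute value $>1$ or all $<1$ depending on whether $F=E^u$ or $F=E^s$) whose ratio is as large as I wish. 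A sufficiently large ratio forces a dominated decomposition $F_q=F_{q,1}\oplus F_{q,2}$ into one-dimensional $Dg_2^{\pi(q)}$-invariant sub-bundles along the orbit, hence signature $(1,1)$ at $q$; since the existence of a dominated splitting is a $C^1$-open property, this signature is robust. The disjointness of supports ensures that signature $(2)$ at $p$ is preserved in passing from $g_1$ to $g_2$.

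Combining both steps, $g_2$ is arbitrarily $C^1$-close to $f$, belongs to $\mathcal{U}$, and contains in $\Lambda_{g_2}$ two hyperbolic periodic points $p,q$ with robustly different signatures $(2)$ and $(1,1)$ on the two-dimensional hyperbolic sub-bundle; this is precisely an equidimensional cycle with robust different signatures in the sense of Definition~\ref{def:equidim}, completing the proof. The main obstacle is executing the two Franks-type perturbations in sequence without mutual interference, which is resolved by exploiting the locality of Franks' lemma and by choosing the orbits of $p$ and $q$ disjoint inside the non-trivial basic piece; a secondary technical point is that producing a large spectral gap at $q$ within an arbitrarily small $C^1$-perturbation requires distributing the required linear modification along the orbit, which is exactly what Franks' lemma allows.
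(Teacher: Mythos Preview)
Your overall strategy mirrors the paper's: produce two periodic points in the basic piece, one with robust signature $(2)$ and one with robust signature $(1,1)$. The paper organizes this via a case analysis on the eigenvalue types already present among periodic points of $f$, whereas you manufacture both signatures in sequence.

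Your Step~1 is essentially correct, and in fact your handling of the repeated-eigenvalue-with-nilpotent-part case is simpler than the paper's. Post-composing the return map $A=\begin{pmatrix}\lambda & n\\0 &\lambda\end{pmatrix}$ with a small rotation $R_\theta$ yields discriminant $\approx 4\lambda n\theta$, which is negative for $\theta$ of the appropriate sign, and this can be realized by modifying $Dg$ at a single point of the orbit via Franks' lemma. The paper instead resorts to a Bonatti--Viana type argument involving auxiliary periodic orbits of large period shadowing $p$ (its Case~3(c)); your observation bypasses this.

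Step~2, however, has a genuine gap. You pick an \emph{arbitrary} periodic orbit $q$ disjoint from that of $p$ and assert that Franks' lemma can make $Dg_2^{\pi(q)}(q)|_{F_q}$ diagonalizable with real eigenvalues of ratio ``as large as I wish''. This is not justified: if $Dg_1^{\pi(q)}(q)|_{F_q}$ has complex eigenvalues with argument bounded away from $0$ and $\pi$, and $\pi(q)$ happens to be small, then an $\varepsilon$-perturbation of each factor may be unable to make the product have real eigenvalues at all. Franks' lemma lets you realize small modifications of each one-step derivative, but it does not by itself tell you which modifications of the \emph{product} are achievable. What fills this gap is precisely the Bonatti--Gourmelon--Vivier dichotomy (Corollary~\ref{BGV}): for periodic points of sufficiently large period, either the cocycle already admits an $m$-dominated splitting on $F$ (so signature $(1,1)$ is present for free), or an $\varepsilon$-perturbation makes the eigenvalues real and equal, after which a further tiny diagonal perturbation makes them distinct. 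The paper uses exactly this in its Case~1; you need to invoke it---and explicitly choose $q$ of large period---to repair Step~2. Alternatively you could appeal to Theorem~\ref{teo1} to find a simple-spectrum periodic point in $\Lambda_{g_1}$ after a generic perturbation supported away from the orbit of $p$.

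A minor side remark: signature $(1,1)$ is already robust as soon as the two eigenvalues are real and distinct, since this is an open condition on the return map; a large eigenvalue ratio is unnecessary.
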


We notice that the property of having one-dimensional finest dominated splitting and the property of exhibiting a 
equidimensional cycle with robust different signatures are $C^1$-open. It is not hard to construct examples that lie in the boundary of the diffeomorphisms with a one-dimensional finest dominated splitting in a basic piece but not on the closure
of diffeomorphisms exhibiting a 
equidimensional cycle with robust different signatures, due to the existence of nilpotent part. In particular, there are examples of $C^1$ 
diffeomorphisms exhibiting periodic points (of low period) with equal eigenvalues that cannot be perturbed to create  
a complex eigenvalue. After this work was completed we were informed that, in the $C^1$-topology, Bochi and Bonatti
~\cite{BoBo} describe all Lyapunov spectra that can be obtained by perturbing the derivatives along periodic orbits of 
a diffeomorphism. 

The next result explains the conceptual similarity between equidimensional cycles with robust 
different signatures and the notion of heterodimensional cycles for dynamics beyond uniform hyperbolicity. 

\begin{maintheorem}\label{thm:explo}
Assume that $\dim M =3$ and let $\Lambda$ be a non-trivial basic piece of a diffeomorphism $f$. If $f|_{\Lambda}$ admits two periodic points $p,q$ with robust different unstable signatures, then there exists a 
$C^1$-open neighborhood $\mathcal U$ of $f$ and a residual subset $\mathcal R\subset \mathcal U$ such that for any $g\in \mathcal R$ we have that:
\begin{enumerate}
\item there exists a countable infinite set of points $P_1\subset Per(g)\cap \Lambda_g$ with $sgn^u(p_1)=sgn^u(p)$ for all $p_1\in P_1$ and
\item there exists a countable infinite set of points $P_2\subset Per(g)\cap \Lambda_g$ with $sgn^u(p_2)=sgn^u(q)$ for all $p_2\in P_2$,
\end{enumerate}
where $\Lambda_g$ stands for the hyperbolic continuation of the set $\Lambda$.
\end{maintheorem}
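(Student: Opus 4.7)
The plan is a Baire category argument. Because $\dim M = 3$ and $p,q$ have robustly different unstable signatures, the unstable bundle $E^u$ over $\Lambda$ must have dimension two. The two possible unstable signatures are then $(1,1)$, meaning $Df^{\pi(\cdot)}\mid_{E^u}$ has two real eigenvalues of distinct moduli (a $C^1$-open condition), and $(2)$, meaning no further dominated splitting; the robustness of $(2)$ forces a complex conjugate pair of eigenvalues on $E^u_q$ (since equal real eigenvalues split under arbitrarily small perturbations), again a $C^1$-open condition. After relabeling I would take $\text{sgn}^u(p) = (1,1)$ and $\text{sgn}^u(q) = (2)$ with complex eigenvalues on $E^u_q$.

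For each $n\ge 1$ and each $\sigma \in \{(1,1),(2)\}$ I would consider the set
$$
\mathcal A_n^\sigma := \bigl\{g \in \mathcal U :\ \Lambda_g \text{ contains at least } n \text{ distinct periodic orbits with } \text{sgn}^u = \sigma\bigr\},
$$
which is $C^1$-open by the previous remarks and by the continuity of the hyperbolic continuation $g \mapsto \Lambda_g$. To prove density of $\mathcal A_n^\sigma$, I would fix $g \in \mathcal U$, $\varepsilon > 0$ and $n \ge 1$, and use that periodic orbits are dense in the non-trivial basic piece $\Lambda_g$ to select $n$ pairwise distinct periodic orbits $\gamma_1,\dots,\gamma_n \subset \Lambda_g$ with pairwise disjoint tubular neighborhoods $U_i$. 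Franks' lemma applied successively at each $\gamma_i$ yields $g' \in \mathcal U$ with $\|g'-g\|_{C^1}<\varepsilon$, $g'=g$ off $\bigcup_i U_i$, and with each $Dg'^{\pi(\gamma_i)}(\gamma_i)$ any prescribed $\varepsilon$-small perturbation of $Dg^{\pi(\gamma_i)}(\gamma_i)$ respecting the hyperbolic splitting. Prescribing each cocycle on $E^u_{\gamma_i}$ to have either two real eigenvalues of distinct moduli (for $\sigma=(1,1)$) or a complex conjugate pair (for $\sigma=(2)$), and invoking the persistence of the hyperbolic structure to guarantee that each $\gamma_i$ still belongs to $\Lambda_{g'}$, one obtains $g' \in \mathcal A_n^\sigma$.

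By Baire's theorem the set
$$
\mathcal R := \bigcap_{n\ge 1}\bigl(\mathcal A_n^{(1,1)}\cap \mathcal A_n^{(2)}\bigr)
$$
is residual in $\mathcal U$, and every $g \in \mathcal R$ possesses countably many periodic orbits of each signature in $\Lambda_g$. The main technical obstacle is the careful use of Franks' lemma inside the basic piece: one must simultaneously realize the prescribed eigenvalue configuration on $E^u$ via an $\varepsilon$-small linear perturbation preserving the splitting $E^s \oplus E^u$, and ensure that the perturbed orbits persist in the hyperbolic continuation $\Lambda_{g'}$. Both are standard consequences of Franks' lemma combined with the structural stability of hyperbolic basic pieces, but the hypothesis of robust different signatures on $\Lambda$ is essential to guarantee that neither target eigenvalue configuration is obstructed by a dominated splitting of $E^u$ over all of $\Lambda$.
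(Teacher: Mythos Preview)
Your Baire-category framework and the identification of the two possible signatures $(1,1)$ and $(2)$ are the same as in the paper. The gap is in the density step for $\sigma=(2)$. You claim that, given any $n$ periodic orbits $\gamma_1,\dots,\gamma_n$ in $\Lambda_g$, Franks' lemma lets you force a complex conjugate pair of eigenvalues on $E^u_{\gamma_i}$ by an $\varepsilon$-$C^1$-small perturbation. This is false in general: if $Dg\mid_{E^u}$ admits an $m$-dominated splitting along the orbit of $\gamma_i$, that splitting is robust under the small-per-step perturbations allowed by Franks' lemma, and the product $Dg'^{\pi(\gamma_i)}(\gamma_i)\mid_{E^u}$ will still have real eigenvalues of distinct moduli. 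The absence of a dominated splitting of $E^u$ over \emph{all} of $\Lambda_g$ (which is what the robust signature $(2)$ at $q$ buys you) does \emph{not} preclude $m$-domination along individual periodic orbits; indeed, every orbit shadowing $p_g$ long enough will have it. Your final paragraph alludes to this obstruction but does not resolve it.

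The paper closes this gap via the Bonatti--Gourmelon--Vivier dichotomy (Corollary~\ref{BGV}): for each $\varepsilon>0$ there are $m,\ell$ such that every periodic orbit of period $\ge \ell$ either carries an $m$-dominated splitting on $E^u$, or can be perturbed so that $Dg^{\pi(x)}(x)\mid_{E^u}$ has a double real eigenvalue (and then, by a further small perturbation, a complex pair). The robust presence of $q_g$ with $\text{sgn}^u(q_g)=(2)$ forces the \emph{first} alternative to fail on a set of periodic orbits accumulating on $q_g$; it is among \emph{those} orbits that one must choose the new periodic point of signature $(2)$, not arbitrarily in $\Lambda_g$. Symmetrically, to produce a new orbit of signature $(1,1)$ one either already has $m$-domination along it, or one uses the second alternative to reach equal real eigenvalues and then splits them. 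So the missing ingredient is not Franks' lemma per se but the BGV dichotomy, together with the observation that the robust signature-$(2)$ point $q$ guarantees a supply of long periodic orbits without $m$-domination on $E^u$.
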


%%%%%%%%%%%%%%%%%%%%%%%%%%%%
\subsubsection*{An application for flows}

In what follows we apply our results to the case of hyperbolic basic pieces for $C^1$-flows. By Bowen and  Ruelle~\cite{BR75}, the restriction of a $C^1$-flow to a hyperbolic basic piece for a $C^1$-flow admits a finite Markov partition and is semiconjugate to a suspension flows over a subshift of finite type and 
there exists a one-to-one correspondance between invariant measures of the discrete-time and 
continuous-time dynamical systems.
Indeed, given a measurable space $\Sigma$, a map $R\colon \Sigma\rightarrow{\Sigma}$, an $R$-invariant probability
measure $\tilde{\mu}$ defined in $\Sigma$ and a ceiling function
$h\colon \Sigma\rightarrow{\mathbb{R}^{+}}$ in $L^1(\tilde\mu)$ and 
bounded away from zero, 
consider the space $M_{h}\subseteq{\Sigma\times{\mathbb{R}_+}}$ defined by
$M_h=\{(x,t) \in \Sigma\times{\mathbb{R}_+}: 0 \leq t \leq h(x) \}/\sim$
where $\sim$ stands for the identification between the pairs $(x,h(x))$ and $(R(x),0)$. 
The semiflow defined on $M_h$ by $S^s(x,r)=(R^{n}(x),r+s-\sum_{i=0}^{n-1}h(R^{i}(x)))$, 
where $n\in{\mathbb{N}_0}$ is uniquely defined by 
$\sum_{i=0}^{n-1}h(R^{i}(x))\leq{r+s}<\sum_{i=0}^{n}h(R^{i}(x))$ 
is called a \emph{suspension semiflow}. If $R$ is invertible then $(S^t)_t$ is indeed a flow.
Since $h$ is bounded below, if $\text{Leb}_1$ denotes the one dimensional Lebesgue measure, then  $\eta \mapsto \bar\eta:=\frac{\eta \times \text{Leb}_1}{\int h \;d\eta}$
is a  one-to-one correspondence between $R$-invariant probability measures and $S^t$-invariant probability measures (we refer the reader \cite{BR75} for more details).

Hence, if $T_x \Sigma=E^1_x\oplus \dots E^k_x$
for a $\eta$-generic point $x$  then $T_{(x,s)} \Sigma=E^1_{(x,s)}\oplus \dots E^k_{(x,s)} \oplus \langle X \rangle$ 
is the Oseledets splitting associated to the vector field $X$ for $\bar\eta$-almost every $(x,s)$, where $\langle X \rangle$ is the subbundle generated by the vector field. In particular,  the Lyapunov spectrum of $(X,\bar\eta)$ is given by the 
Lyapunov exponents of $(S,\eta)$ and zero (corresponding to the flow direction).
Given the previous characterization, a hyperbolic basic piece $\Lambda$ for a vector field $X$, the vector field 
$X\mid_\Lambda$ is $C^1$-simple star if and only if the Poincar\'e map $S\mid_{\Sigma\cap \Lambda}$ is $C^1$-simple 
star. 
We also stress that such a Poincar\'e map inherits the information of the Lyapunov
exponents for both periodic points and all invariant measures: the Lyapunov exponents for the flow and 
for the Poincar\'e map differ from the integral of the return time function. The definitions from the 
discrete time setting extend to this setting in an analogous way. Since the $C^1$-perturbation tools used
in the proofs of Theorems~\ref{thm:1FDS} and ~\ref{thm:dichotomy} are available for flows, then the
following result is a consequence of the above mentioned theorems.

\begin{maincorollary}\label{cor:flows}
If $(X_t)_t$ is a simple star structurally stable flow, then $(X_t)_t$ admits a one-dimensional 
finest dominated splitting and the Lyapunov exponent functions vary continuously with the invariant
probability measure. 
Moreover, if $\dim M =4$ we have the following dichotomy: if $(X_t)_t$ is a structurally stable flow 
then either $(X_t)_t$ admits a one-dimensional finest dominated splitting,  or $(X_t)_t$ 
can be $C^1$-approximated by flows that exhibit a equidimensional cycle
associated points with robust different signatures. 
\end{maincorollary}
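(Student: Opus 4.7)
The plan is to reduce both statements to their discrete-time counterparts (Theorems~\ref{thm:1FDS} and~\ref{thm:dichotomy}) via Markov partitions and Poincar\'e return maps, and then lift the conclusions back to the flow by adjoining the flow direction. Throughout, let $\Lambda$ be a hyperbolic basic piece for $(X_t)_t$, let $\Sigma$ be a Markov cross-section provided by~\cite{BR75}, and let $S\colon \Sigma\cap \Lambda \to \Sigma\cap\Lambda$ denote the Poincar\'e return map with return time function $h$, bounded away from zero.

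For the first assertion, I would first note that the correspondence $\eta \mapsto \bar\eta$ between $S$-invariant and $(X_t)$-invariant probability measures discussed in the excerpt is a homeomorphism with respect to the weak$^*$-topologies (after the normalization by $\int h\, d\eta$), and that periodic orbits for the flow in $\Lambda$ correspond bijectively to periodic orbits for $S$ in $\Sigma \cap \Lambda$. Since the Lyapunov spectrum of $(X,\bar\eta)$ consists of the spectrum of $(S,\eta)$ together with a zero exponent in the flow direction $\langle X\rangle$, a periodic point $p$ for $S$ has simple spectrum if and only if the corresponding periodic orbit for the flow has simple (non-flow) spectrum. Because $C^1$-perturbations of $S$ in $\Sigma$ can be realized by $C^1$-perturbations of $(X_t)_t$ supported in a tubular neighborhood of $\Sigma$ (and vice-versa), the hypothesis that $(X_t)_t$ is simple star translates to $S|_{\Sigma\cap \Lambda}$ being simple star in the sense of Definition~\ref{def:simplestar}. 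Theorem~\ref{thm:1FDS} then yields a one-dimensional finest dominated splitting $T_x\Sigma = E^1_x \oplus \cdots \oplus E^{\dim \Sigma}_x$ for $S$ on $\Sigma\cap \Lambda$, together with continuity of the Lyapunov exponent map on $\mathcal M_1(S|_{\Sigma\cap\Lambda})$. Transporting this decomposition along the flow and adjoining $\langle X\rangle$ in the correct position (determined by the magnitude of the exponent, namely the zero slot) produces a continuous $Df$-invariant decomposition $T_{(x,s)}M = E^1_{(x,s)}\oplus \cdots \oplus \langle X\rangle \oplus \cdots \oplus E^{\dim\Sigma}_{(x,s)}$ into one-dimensional subbundles; the domination relations are inherited from $S$, since $\| DX_t \mid_{E^i}\|$ and $\| DS^n \mid_{E^i}\|$ differ only by the bounded factor coming from the return time. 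Continuity of the flow Lyapunov spectrum on $\mathcal M_1(X_t|_\Lambda)$ then follows from the continuity for $S$ together with the identity $\lambda_i(X,\bar\eta) = \lambda_i(S,\eta)/\int h\, d\eta$ and the fact that $\eta \mapsto \int h \, d\eta$ is continuous and bounded away from zero.

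For the dichotomy in dimension four, a structurally stable flow is Axiom A, so $\Omega(X)$ decomposes into finitely many hyperbolic basic pieces, and on each non-trivial piece $\Lambda$ the Markov cross-section $\Sigma$ is a $3$-dimensional manifold on which $S$ acts as a $C^1$-diffeomorphism preserving the three-dimensional basic piece $\Sigma\cap \Lambda$. Theorem~\ref{thm:dichotomy} applies to $S|_{\Sigma\cap\Lambda}$: either this map admits a one-dimensional finest dominated splitting, which via the lifting construction above produces a one-dimensional finest dominated splitting for $(X_t)_t$ on $\Lambda$, or $S$ can be $C^1$-approximated by diffeomorphisms exhibiting an equidimensional cycle in the continuation of $\Sigma \cap \Lambda$ with robust different signatures. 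In the second case, the standard suspension/desuspension correspondence converts such approximations of $S$ into $C^1$-approximations of $(X_t)_t$ by flows realizing the same cycle in $\Lambda_g$, with the signatures of the corresponding periodic orbits of the flow differing precisely because the signatures of the Poincar\'e periodic points differ (the flow direction contributes a common trivial factor $\langle X\rangle$ to both).

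The main technical point (and the step I would be most careful with) is the two-way translation between $C^1$-perturbations of the Poincar\'e return map on a cross-section and $C^1$-perturbations of the ambient flow: one must verify that the perturbations used in the proof of Theorem~\ref{thm:dichotomy}, which locally modify the derivative along a periodic orbit, can be realized as the time-one map of a flow perturbation of controlled size, and that the resulting equidimensional cycle and its robustness persist after suspension. This is routine but requires a bump-function construction in a flow-box neighborhood of the relevant periodic orbits and transitions along the heteroclinic connections, together with the observation that the signature of a periodic orbit of the flow is intrinsically determined by the finest dominated splitting of $DX_T$ on the normal bundle, which is exactly the data controlled by $S$.
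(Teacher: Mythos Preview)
Your proposal is correct and follows essentially the same route as the paper: the paper does not give a separate proof of Corollary~\ref{cor:flows} but simply observes, in the paragraph preceding the statement, that the Markov section/Poincar\'e map correspondence of~\cite{BR75} identifies the simple star property, signatures, and Lyapunov spectra of the flow with those of the return map $S$, and that the $C^1$-perturbation tools used in Theorems~\ref{thm:1FDS} and~\ref{thm:dichotomy} are available for flows, so the corollary follows. Your write-up spells out this reduction in more detail (the bijection of invariant measures, the lifting of the dominated splitting with $\langle X\rangle$ inserted, the scaling $\lambda_i(X,\bar\eta)=\lambda_i(S,\eta)/\int h\,d\eta$, and the flow-box realization of perturbations of $S$), but the underlying strategy is identical.
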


%%%%%%%%%%%%%%%%%%%%%%%%%%%%
\section{Proofs}

In this section we prove our main results. 

%%%%%
\subsection{Proof of Theorem~\ref{teo1}}

The arguments here use strongly that $M$ is a three-dimensional manifold.
Let $r\ge 1$, let $f$ be a $C^r$-diffeomorphism and $\Lambda$ be a non-trivial hyperbolic basic set for $f$ and let 
$\cU$ be a $C^r$-open neighborhood of $f$ so that the hyperbolic continuation $\Lambda_g$ is well defined
for all $g\in \cU$. 

In particular, for every $g\in \mathcal U$ there exists
an homeomorphism $h_g: \Lambda \to  \Lambda_g$ close to the identity so that $h_g \circ f\mid_{\Lambda} = g\mid_{\Lambda_g} \circ h_g$.
Consequently, if $(p_j)_{j=1\dots \ell}$ are the periodic points of period $m$ for $f\mid_\Lambda$ then the continuations
$(h_g(p_j))_{j=1\dots \ell}$ are the periodic points of period $m$ for $g$.
Let $Per_s(g)$ denote the set of periodic points with simple Lyapunov spectrum.
The set 
$$
\mathcal U_n =\Big\{ g\in \mathcal U \colon \text{Per}_s(g) \, \text{is $\frac1n$-dense in}\, \Lambda_g \Big\}
$$
is clearly a $C^r$-open subset of $\cU$. We claim that $\cU_n$ is a $C^r$-dense subset of $\cU$.
This will be enough to prove the theorem because the residual subset
$\mathcal R:=\bigcap_{n\in \mathbb N} \mathcal U_n \subset \cU$ is so that the set of
periodic points with simple Lyapunov spectrum for any $g\in \mathcal R$
is dense in $\Lambda_g$. In other words, the residual subset $\mathcal R\subset \cU$ satisfies the
requirements of the theorem.
Thus, we are left to prove that for every $n\ge 1$ the set $\cU_n$ is a $C^r$-dense subset of $\cU$.

Before that we need a simple and useful lemma to perform the local $C^r$-per\-tur\-bations. This lemma is quite different from the well-known Franks' lemma because the approximation on the derivative depends on a pre-fixed size of neighborhood of the support of the perturbation.

\begin{lemma}\label{pert}
Given a $C^r$ diffeomorphism $f\colon M\rightarrow M$, $\epsilon>0$, $p\in M$ and $R>0$, there exists $\delta:=\delta(\epsilon,R)>0$ such that the following holds: if $A(p)\colon T_xM\rightarrow T_{f(x)}M$ is $\delta$-close to $Df(p)$ in the uniform norm of operators then there exists a $C^r$ diffeomorphism $g$ such that:
\begin{enumerate}
\item $g$ is $\epsilon$-$C^r$-close to $f$;
\item $g$ is supported in $B\left(p,R\right)$ and
\item $Dg(p)=A(p)$.
\end{enumerate}
\end{lemma}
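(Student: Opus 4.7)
The aim is to realize the prescribed linear map $A(p)$ as the derivative at $p$ of a $C^r$-diffeomorphism $g$ that coincides with $f$ outside $B(p,R)$ and is $\epsilon$-$C^r$-close to $f$. The natural strategy is to add to $f$ a compactly supported linear perturbation localized via a bump function, and to quantify the cost of shrinking the localization radius to $R$.

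First I would pass to exponential coordinates at $p$ and $f(p)$, assuming $R$ is less than the injectivity radius (no loss of generality, since a perturbation supported in a smaller ball is also supported in $B(p,R)$). Fix once and for all a smooth bump $\phi:\mathbb{R}^d\to[0,1]$ with $\phi\equiv 1$ near the origin and $\supp\phi\subset B(0,1)$. Given $A(p)$ at distance at most $\delta$ from $Df(p)$, set $L:=A(p)-Df(p)$ and, in local coordinates at $p$, define
\[
H(x):=\phi(x/R)\,L\,x,\qquad g:=f+H,
\]
extending $g:=f$ outside the chart. Item (2) holds by construction, and item (3) follows from $\phi(0)=1$, which yields $Dg(p)=Df(p)+L=A(p)$.

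The quantitative content is the estimate of $\|H\|_{C^r}$. Using the Leibniz rule together with the scaling $D^k\bigl(\phi(\cdot/R)\bigr)=R^{-k}(D^k\phi)(\cdot/R)$, and the fact that $H$ vanishes outside $B(0,R)$, one obtains $\|D^k H\|_\infty\le C_{k,\phi}\,\|L\|\,R^{-(k-1)}$ for every $0\le k\le r$ (with the convention $R^{-(k-1)}=R$ when $k=0$). Thus $\|H\|_{C^r}\le \widetilde C_r\,\|L\|\,\max\{R,R^{-(r-1)}\}$, and it suffices to take
\[
\delta(\epsilon,R):=\min\Bigl\{\epsilon/\bigl(\widetilde C_r\max\{R,R^{-(r-1)}\}\bigr),\;\delta_0(f)\Bigr\},
\]
where $\delta_0(f)>0$ is an auxiliary threshold, depending only on $f$, ensuring that any $C^1$-perturbation of $f$ of size $\le C\delta_0(f)$ remains a $C^r$-diffeomorphism. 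This yields item (1).

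The only real obstacle is preserving the diffeomorphism property. Outside $\supp H$ the map $g$ agrees with $f$, so injectivity and properness are inherited. On the support, the bound on $\|DH\|_\infty$ keeps $Dg$ invertible everywhere, making $g$ a local diffeomorphism; a further smallness condition on $\|H\|_\infty$ (automatic from the $C^r$-estimate since $\|H\|_\infty\le R\|L\|$) prevents any point of $B(p,R)$ from being identified with a point outside, thereby yielding global injectivity. Hence $g$ is a $C^r$-diffeomorphism satisfying all three conditions and the lemma follows.
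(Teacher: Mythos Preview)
Your argument is correct and complete. The construction $g=f+H$ with $H(x)=\phi(x/R)\,Lx$ and $L=A(p)-Df(p)$ is the standard additive localization, and your Leibniz/scaling estimate $\|D^kH\|_\infty\le C_{k,\phi}\,\|L\|\,R^{-(k-1)}$ is exactly what is needed; the diffeomorphism check is handled properly.

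The paper takes a slightly different route: instead of adding a perturbation, it pre-composes, setting $g=f\circ h$ where $h$ is a bump-interpolation between the linear map $[Df(p)]^{-1}A(p)$ near $p$ and the identity outside $B(p,R)$. The derivative at $p$ then factors as $Dg(p)=Df(p)\cdot[Df(p)]^{-1}A(p)=A(p)$. To control the $C^r$-norm of $h$ the paper invokes Fa\`a di Bruno's formula for the derivatives of $\varphi(\|q\|^2)$. Your additive approach is more elementary in that it avoids Fa\`a di Bruno entirely (only the Leibniz rule is needed, since $Lx$ is linear), and it gives the dependence of $\delta$ on $R$ explicitly. The compositional approach has the minor conceptual advantage that the perturbation lives entirely in a chart at $p$ rather than requiring coordinates at both $p$ and $f(p)$, but for this lemma that makes no practical difference.
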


\begin{proof}
For simplicity we assume all the computations in local charts in $\mathbb{R}^n$ and that $p$ is fixed. Take $\epsilon>0$ and $R>0$ and consider a $C^\infty$ \emph{bump function} $\varphi\colon [0,+\infty[\rightarrow [0,1[$ such that  $\varphi(t)=0$ if $t>\sqrt{R}$ and $\varphi(t)=1$ if $t< \frac{\sqrt{R}}{2}$. 
It follows from Fa\`{a} di Bruno's formula that, given $q\in B\left(p,R\right)$, we have
\begin{equation}\label{FDB}
\frac{\partial^n \varphi(\|q\|^2)}{\partial q_k}
	=\sum\frac{n!}{m_1!1!^{m_1}m_2!2!^{m_2}...m_n!n!^{m_n}}\varphi^{(m_1+...+m_n)}(\|q\|^2)
	\prod_{j=1}^n \left(\frac{\partial^j \|q\|^2}{\partial q_k}\right)^{m_j},
\end{equation}
where the sum is over all vectors with nonnegative integers entries $(m_1,..., m_n)$ such that we have 
$\sum_{j=1}^n j.m_j=n$. These derivatives are bounded above by a constant $C$ (depending on $R$).

Let $A\colon\mathbb{R}^n\rightarrow\mathbb{R}^n$ be a linear map $\delta$-close to $Df(p)$, where $\delta>0$ is very small (to be determined later on and depending on the derivatives in (\ref{FDB})).
Take a $C^r$ smooth map $\hat h\colon B\left(p,R\right)\rightarrow \mathbb{R}^n$  defined by $\hat h(q)=[Df(p)]^{-1}\cdot A(p)$, take $h(q)=\varphi (\|q\|^2)\hat h(q) + (1-\varphi (\|q\|^2)) Id$ and define $g:=f\circ h$. Clearly, by construction, $g$ coincides with $f$ in the complement of $B\left(p,R\right)$ and
$$Dg(p)=Df(h(p))\cdot Dh(p)=Df(p)\cdot D\hat h(p)=Df(p)\cdot [Df(p)]^{-1}\cdot A(p) =A(p).$$
This proves  (2) and (3). Furthermore, since $\hat h$ is constant and $\| \hat h(q)\|<\delta$, then the
${C^r}$-distance between $f$ and $g$ is proportional to $\|\hat h\|$ and can be taken smaller than $\epsilon$ provided
that $\delta$ is small enough. 
\end{proof}

We are now in a position to proceed with the proof that for every $n\ge 1$ the set $\cU_n$ is a $C^r$-dense subset of $\cU$. In fact this is a consequence of the following 

\begin{proposition}
Given $\epsilon>0$, $n\ge 1$ and a non-trivial basic piece $\Lambda$ 
of a $C^r$-diffeomorphism $f$, there exists a $C^r$-diffeomorphism $g$ that is $\epsilon$-$C^r$-close to $f$ and displaying a $g$-periodic orbit $q$ that is
$\frac1n$-dense in $\Lambda_g$ and has simple Lyapunov spectrum.
\end{proposition}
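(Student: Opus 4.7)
Since $\dim M = 3$, the hyperbolic splitting over $\Lambda$ has the form $E^s\oplus E^u$; I assume $\dim E^u = 2$, the other case being symmetric upon considering $f^{-1}$. My plan has two stages: first locate an $f$-periodic orbit that is already $\frac{1}{n}$-dense in $\Lambda$, and then adjust its Lyapunov spectrum to be simple by a $C^r$-small perturbation localized at finitely many orbit points through Lemma~\ref{pert}.

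For the first stage I invoke the specification/shadowing property of non-trivial hyperbolic basic pieces: starting from a $\frac{1}{2n}$-net $\{z_1,\dots,z_m\}\subset\Lambda$ of periodic points (which exist by density), a periodic pseudo-orbit dwelling at each $z_i$ is shadowed by a genuine periodic point $p\in\Lambda$ with orbit $\mathcal{O}_p$ being $\frac{1}{n}$-dense in $\Lambda$, and whose period $\pi=\pi(p)$ can be made as large as needed by lengthening the sojourn times. The derivative $Df^\pi(p)$ respects the splitting, with a real contracting eigenvalue on $E^s_p$ and a $2\times 2$ expanding block on $E^u_p$ whose spectrum falls in exactly one of three cases: (a) two distinct reals, (b) a repeated real eigenvalue, or (c) a non-real conjugate pair. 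In case (a) we take $g:=f$; otherwise we perturb.

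For the second stage, choose $K\le\pi$ orbit points $x_{j_1},\dots,x_{j_K}$ and pairwise disjoint balls $B(x_{j_i},R_i)$ meeting $\mathcal{O}_p$ only at their centers, and use Lemma~\ref{pert} at each to prescribe $Dg(x_{j_i})=Df(x_{j_i})(I+V_i)$ for any $V_i$ with $\|V_i\|<\delta_i$, via an $\epsilon$-$C^r$-small perturbation supported in $B(x_{j_i},R_i)$ that fixes the center. Since the supports are disjoint, these perturbations compose into a single $g$ which is $\epsilon$-$C^r$-close to $f$, keeps $\mathcal{O}_p$ as a periodic orbit, and yields
\begin{equation*}
Dg^\pi(p) \;=\; Df^\pi(p)\cdot\prod_{i=K}^{1}\bigl[Df^{j_i}(p)\bigr]^{-1}(I+V_i)\,Df^{j_i}(p).
\end{equation*}
In case (b), the locus of matrices on $E^u_p$ with a repeated real eigenvalue is a codimension-one hypersurface, so with $K=1$ an arbitrarily small $V_1$ in a transverse direction already splits the spectrum of $Dg^\pi(p)|_{E^u}$ into two distinct reals. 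Case (c) is the main obstacle, since matrices with non-real spectrum form an open set and no single small perturbation can make the eigenvalues real. I overcome this by taking $\pi$ (hence $K$) large via specification and exploiting path-connectedness of $GL^+(E^u_p)$: pre-select a path from the identity to some $Q$ such that $Df^\pi(p)|_{E^u}\cdot Q$ has simple real spectrum, discretize it into $K$ steps each of size at most $\delta$, and realize each step by the corresponding $V_i$; a final small tweak separates the stable eigenvalue from the unstable ones if necessary. The key difficulty is thus case (c): converting complex to real eigenvalues cannot be achieved by a single small perturbation and forces a multi-point scheme, trading perturbation size (constrained by Lemma~\ref{pert}) for orbit length (supplied by specification).
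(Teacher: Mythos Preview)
Your scheme for case (c) has a genuine gap that is precisely the obstacle the $C^r$ setting ($r\ge 2$) places in front of the naive ``accumulate small perturbations along a long orbit'' idea. Two issues compound:

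\begin{itemize}
\item In Lemma~\ref{pert} the allowed derivative change satisfies $\delta=\delta(\epsilon,R)$, and for $r\ge 2$ one has (up to constants) $\delta\lesssim \epsilon R^{\,r-1}$. Your requirement that each ball $B(x_{j_i},R_i)$ meet $\mathcal O_p$ only at its center forces $R_i$ to be at most the distance from $x_{j_i}$ to the nearest other orbit point. Since you take the period $\pi$ (and $K$) large and the orbit is $\tfrac1n$-dense, you give no reason why $K$ such points with a uniform lower bound on $R_i$ exist; without that, $\sum_i\delta_i$ need not be bounded away from zero and the discretized path cannot be traversed. Specification lets you prescribe where an orbit goes, not where it avoids.
\item Even granting uniform $\delta$, your displayed formula shows that the effect of $(I+V_i)$ on $Dg^\pi(p)$ is the conjugate $[Df^{j_i}(p)]^{-1}(I+V_i)Df^{j_i}(p)$. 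These factors are not simply ``steps of size $\le\delta$ along a path in $GL^+(E^u_p)$'': the conjugating maps grow exponentially in $j_i$ and distort both the size and the direction of each increment. The sentence ``realize each step by the corresponding $V_i$'' hides a nontrivial inversion problem that you do not solve.
\end{itemize}

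The paper circumvents both issues by a different mechanism. It fixes a reference periodic point $p$ and performs a \emph{single} $C^r$-small perturbation there (a rotation $h_{t\xi}$ on $E^u_p$, supported in a fixed ball $B(p,R)\subset U_0$); thus $R$ and $\delta$ are fixed once and for all. It then uses specification to produce periodic points $x_m$ whose orbits spend $m$ iterates inside $U_0$ near $p$ and a bounded number $\ell=\ell(n)$ of iterates making the orbit $\tfrac1n$-dense. The rotation number of $Df_t^{\pi(x_m)}(x_m)|_{E^u}$ then picks up roughly $m$ contributions of size $t\xi$ from the passes near $p$, so its total variation in $t\in[0,1]$ exceeds $1$ once $m$ is large, and some $f_t$ has real (repeated) unstable eigenvalues at $x_m$; a final tiny diagonal perturbation separates them. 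In short: the accumulation comes from the orbit revisiting one perturbed region many times, not from many separate perturbations, and this is exactly what makes the argument go through in $C^r$.

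Your cases (a) and (b) are fine, and your use of specification to obtain $\tfrac1n$-dense periodic orbits is correct; but case (c) needs to be redone along the lines above.
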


\begin{proof}
Let $\epsilon>0$, $f$ be a $C^r$ diffeomorphism, and $\Lambda$ be a non-trivial basic piece for $f$. Assume without loss of generality that
$T_{\Lambda} M = E^s \oplus E^u$ with $\dim E^s=1$ and $\dim E^u=2$ (otherwise just consider the diffeomorphism 
$f^{-1}$). In this three-dimensional setting, $p\in \text{Per}(f)$ has simple Lyapunov spectrum if and only if there are no
 complex eigenvalues for $Df^{\pi(p)}(p)\mid_{E^u_p}$ or $Df^{\pi(p)}(p)\mid_{E^u_p}$ has no equal real eigenvalues. Notice that  this last mentioned situation has empty $C^r$-interior. 

Fix $p\in \text{Per}(f)$. Up to consider $f^{\pi(p)}$ we just assume that $\pi(p)=1$ and that the restriction of $f$ to the 
non-trivial hyperbolic homoclinic class $\Lambda$ associated to $p$ is topologically mixing. In particular, $f\mid_{\Lambda}$ satisfies the specification property:
for each $\zeta>0$, there is an integer $N(\zeta)$ for which the following is true:
if $I_1,I_2,\cdots,I_k$
are pairwise disjoint intervals of integers with
$$\min\{|m-n|:m\in I_i,n\in I_j\}\ge N(\zeta)$$
for $i\not=j$ and
$x_1,\cdots,x_k \in X$ then there is a point $x\in X$ such that
$d(f^j(x),f^j(x_i))\le\zeta$ for $j\in I_i$ and $1\le i\le k$.
Every topologically mixing hyperbolic elementary set of a diffeomorphism satisfies
the specification property (see \cite{B}).

Consider a $\frac1{2n}$-dense set $F$ in $\Lambda$. Then there exists $\ell=\ell(n)\ge 1$ so that
for any large $m$ there exists a periodic point $x_m\in \Lambda$  of period equal to $m+\ell\times \# F$, such that the first 
$m$ iterates of $x_m$ belong to a small neighborhood $U_0$ of $p$ and the piece of orbit $\{f^{j+m}(x_m) : 1\le j \le \ell\}$ 
is $\frac1n$-dense.

 Observe that $\ell$ is fixed and depends only on $n$. Defined in this way, as $m$ increases, the point $x_m$ is sufficiently close  to $p$ and spends a large amount of time near $p$, and the matrix $Df^{m}(x_m)$ inherits the dynamical behavior of $[Df(p)]^m$ whereas the matrix $Df^{m+\ell}(x_m)$, corresponding to the periodic orbit computed along the whole orbit of $x_m$. 
 Define the compact $f$-invariant hyperbolic sets 
\begin{equation}\label{Knf}
K_n(f):=\overline{\bigcup_{m\geq n}\bigcup_{i\in\mathbb{Z}} f^{i}(x_m)} \subset \Lambda, 
\;\; n\ge 1.
\end{equation}
 The persistence of the hyperbolic splitting $E_{f}^u\oplus E_{f}^s$ for the map $f$ under $C^1$ (thus $C^{r}$) perturbations allows us to conclude that any $g$ sufficiently $C^{r}$-close to $f$ also has a hyperbolic splitting $E_{g}^u\oplus E_{g}^s$ (with a two-dimensional unstable and one-dimensional stable bundles $E_g^u$ and $E_g^s$, respectively) over the 
 $g$-invariant compact set $K_n(g)$ obtained as in \eqref{Knf} by taking the  continuation of the hyperbolic periodic 
 points $(x_m)_m$.

Now we will show that some $C^r$-small perturbation $g$ can be made in order to find a periodic point $x_m$ in $K_n(g)$
such that all the eigenvalues of $Dg^{\pi(x_m)}(x_m)$ are real and distinct, where $\pi(x_m)=m+\ell\times \# F$. Such point $x_m$ is
$\frac1n$-dense by construction.
Actually, although explicitly defined at a neighborhood of $p$, the detailed construction of the perturbation map necessarily makes use of bump-functions. For shortness we shall omit the full detailed construction. For any $n\ge 1$, let $\rho(m,f)$ stand for the rotation number of  $Df^{\pi(x_m)}(x_m)\mid_{E^u_{x_m}}$. Given a continuous arc of maps $(f_t)_{t\in I}$ near $f$ let $\delta(m,f_t)$ denote the oscillation of the rotation number along the arc, that is, 
$\delta(m,(f_t)_t)=\sup \{ | \rho(m,f_t) - \rho(m,f_s) | \colon s,t \in I \}$.
The next claim corresponds to \cite[Lemma 9.3]{BoV04} in our setting.

\bigskip

\noindent\textbf{Claim:}
There exists a continuous arc of maps $\{f_t\}_{t\in[0,1]}$ satisfying $f_0=f$ and $\|f_t-f_0\|_{C^r} <\epsilon$ for all $t\in[0,1]$ so that the following holds: for every $t$ there exists $m_t\in\mathbb{N}$ so that $\delta(m,(f_s)_{s\in[0,t]})>1$ for all $m\geq m_t$.

\bigskip

\noindent\emph{Proof of the claim:} We consider from now on a basis adapted to the splitting  $E^u_p\oplus E^s_p$ (for $f$), a constant $\sigma\in(0,1)$, $\gamma>1$ and maps $h_{t\xi}$ with support in a small ball $B(p,R)\subset U_0$ 
such that:
$$Df(p)=\begin{pmatrix}\sigma&0&0\\0&\gamma\cos\theta&-\gamma\sin\theta\\0&\gamma\sin\theta&\gamma\cos\theta\end{pmatrix}\,\,\,\,\,\text{and}\,\,\,\,\,Dh_{t\xi}(p)=\begin{pmatrix}1&0&0\\0&\cos (t\xi)&-\sin(t\xi)\\0&\sin(t\xi)&\cos(t\xi)\end{pmatrix}$$
As in Lemma~\ref{pert}, consider the $C^r$-diffeomorphisms $f_t=h_{t\xi}(p)\circ f(p)$ and let $\xi$ be sufficiently small in order to have $f_t$ $\epsilon$-$C^r$-close to $f$ for all $t\in[0,1]$. 
Since the uniformly hyperbolic splitting varies continuously with the point and the angle between stable and unstable bundles is bounded away from zero each fiber $E^u_{x}$ and $E^s_{x}$ is a graph over $E^u_{p}$ and $E^s_{p}$ (for all maps
in the family $(f_t)_{t\in [0,1]}$) for all points $x$ in the hyperbolic isolated set.
Identifying all the stable and unstable subspaces with $E^s_p$ and $E^u_p$ 
by parallel transport, consider the decomposition
$$
Df_t^{\pi(x_m)}(x_m)=\alpha_{t,m,m} . (\dots) . \,\alpha_{t,m,1} . \,\beta_{t,m},
$$
where the $\alpha$'s correspond to the derivative $Df_t$ at iterates of $x_m$ by $f_t$ inside the neighborhood $U_0$ 
of $p$ and the $\beta$'s to the derivative $Df_t$ at the iterates of $x_m$ by $f_t$ that are outside it. Since, for each $m$ 
we only allow $\ell(n)\times \#F$ iterates outside the neighborhood $U_0$ we obtain that, up to consider some subsequence if necessary, $\beta_{t,m}$ converges uniformly to 
some $\beta_t$ as $n\rightarrow\infty$ (i.e. the period of the $x_m$'s increases). 
We need to prove that for any $t$ there exists $m_t\in\mathbb{N}$ so that $\delta(m,(f_s)_{s\in[0,t]})>1$ for all $m\geq m_t$. Notice that each $\alpha_{t,m,i}\mid_{E^u}$ is uniformly close to the composition of an homothety by a rotation of angle $t\xi+\theta$ and so all give a contribution to the increase of the original rotation number by some positive constant and the claim is proved. 
\hfill $\square$

\medskip

By the claim, for any $\epsilon>0$ there exists a small $t$ and a large $m$ such that $f_t$ is $\epsilon$-$C^r$-close to $f$
and $\rho(m,f_t)\in\mathbb{Z}$. Therefore, $Df_t^{\pi(x_m)}(x_m)|_{E^u_{f_t}}$ has two equal real eigenvalues of norm larger than 1. Then, we perturb once more to obtain simple spectrum by simply consider a `diagonal' perturbation at $x_m$ 
given for $\eta_1,\eta_2> 0$ arbitrarily small and $\eta_1\not=\eta_2$, in a suitable base of eigenvalues, by the matrix
$$
\begin{pmatrix}1&0&0\\0&\eta_1&0\\0&0&\eta_2\end{pmatrix}.
$$
The argument to realize such perturbation of the derivative by a $C^r$-close diffeomoprhism follows the lines of Lemma~\ref{pert}. This completes the proof of the proposition.
\end{proof}

%%%%%%%%%%%%%%%%%%%%%%%%%%%%
\subsection{Proof of Theorem~\ref{thm:1FDS}}

Assume that $f|_{\Lambda}$ is  simple star. We explore the fact that the set of (hyperbolic) periodic points is dense in 
$\Lambda$. If $\Lambda$ consists of a hyperbolic periodic point the result is immediate. So, we assume that $\Lambda$
is a non-trivial hyperbolic set.

Using that a dominated splitting extends to the closure of a set and that $f\mid_\Lambda$ is simple star (thus cannot be $C^1$-approximated by a diffeomorphism so that the Lyapunov spectrum at some periodic point is not simple) then, by Corollary~\ref{BGV}, there exist $m_1, n_1\ge 1$ so that there exists an $m_1$-dominated splitting 
$E\oplus F$ over the set $\Lambda\cap \overline{\bigcup_{\ell \ge n_1} Per_{\ell}(f)}$. 

If $E$ has dimension one we take $E^1=E$. Otherwise, repeating the procedure 
with $Df\mid_E$ there exists $n_2\ge 1$ and $m_2\ge 1$ (multiple of $m_1$) and a 
$m_2$-dominated splitting
$
	 T_{\Lambda \cap \overline{\bigcup_{\ell \ge n_2} Per_{\ell}(f)}} M 
	= (\hat E^1 \oplus \hat E^2) \oplus F
$
with $0<\dim \hat E^1, \dim \hat E^2 <\dim E$. If $\hat E^1$ has dimension one we take $E^1=\hat E^1$. Proceeding recursively with $E$ and $F$ it follows that there exist $m \ge 1$ 
and an $m$-dominated splitting 
$$
T_{\Lambda \cap \;\overline{\bigcup_{\ell \ge n} Per_{\ell}(f)}}\, M = E^1 \oplus \dots \oplus E^{\dim M}.
$$
Using that $\Lambda$ is a (non-trivial) homoclinic class and that every non-trivial homoclinic class contains 
infinitely many periodic points then $\bigcup_{\ell \ge n} Per_{\ell}(f)$ is dense in $\Lambda$.
Thus $f\mid_\Lambda$ has a one-dimensional finest 
dominated splitting, and the first claim in the theorem follows.

Now, we shall prove that the Lyapunov exponent function associated to $f\mid_\Lambda$ is continuous. In fact, 
let $T_{\Lambda} M= E^1 \oplus \dots \oplus E^{\dim M}$ be the one-dimensional, $Df$-invariant finest dominated 
splitting associated to $f$. 
Since dominated splittings varies continuously with the point
then there exists a continuous function
$$
\Lambda \ni x \mapsto (v_x^1, v_x^2, \dots, v_x^{\dim M}) \in (T_x^1M)^{\dim M}
$$
such that $v_x^i\in E^i_x$ and, by \eqref{eq:poitwise3}, 
the Lyapunov exponents are given by the integrals of continuous functions
$\lambda_i(f,\mu) = \int \phi(x,v_i^x) \, d\mu(x)$ for every $1\le i \le \dim M$ and $x\in \Lambda$.
 
It is immediate that these vary continuously with respect to $\mu$ in the weak$^*$ topology
and, consequently, the Lyapunov exponent map ~\eqref{LE} is continuous. 
By the dominated splitting property it follows that the Lyapunov spectrum of every invariant measure $\mu$ is simple. 
This finishes the proof of Theorem~\ref{thm:1FDS}.

\begin{remark}
We observe that an alternative argument for the simplicity of the Lyapunov spectrum for all invariant measures 
follows from the denseness of periodic measures in the set of $f$-invariant probability measures, by 
specification~\cite{Sigmund}.
\end{remark}

\begin{remark}
Since dominated splittings also vary continuously with the diffeomorphism,
then every $g$ that is $C^1$-close to $f$, then $\Lambda_g$ is a basic piece with a one-dimensional 
finest dominated splitting and there exists a continuous function
$$
(x,g) \mapsto (v_{x,g}^1, v_{x,g}^2, \dots, v_{x,g}^{\dim M}) \in (T_x^1M)^{\dim M}
$$
such that $v_x^i\in E^i_x$. Since Lyapunov exponents are computed as integrals of continuous
functions (cf. \eqref{eq:poitwise3}) then given a sequence $(f_n)_n$ convergent to $f$ in the $C^1$-topology,
and invariant measures $\mu_n  \in \cM_1(f_n)$ that converge to $\mu\in \cM(f)$ then we also obtain that 
$\lim\limits_{n\to\infty} \lambda_i(f_n,\mu_n)=\lambda_i(f,\mu) $
for every $1\le i \le \dim M$.
\end{remark}

%%%%%%%%%%%%%%%%%%%%%%%%%%%%
\subsection{Proof of Theorem~\ref{thm:dichotomy}}\label{second}

Assume that $\dim M =3$,  $\Lambda$ is a non-trivial basic piece of a diffeomorphism $f$ and 
$\mathcal{U}$ a $C^1$-open neighborhood of $f$ on which the analytic continuation $\Lambda_g$ of $\Lambda$ is well-defined for all $g\in\mathcal{U}$.
To prove the theorem we are reduced to prove that if $f|_{\Lambda}$ cannot be $C^1$-approximated by diffeomophisms 
$g$ so that $g\mid_{\Lambda_g}$ has a one-dimensional finest dominated splitting then $f$ can be $C^1$-approximated 
by diffeomorphisms $g$ that exhibits a equidimensional cycle in $\Lambda_g$ with robust different signatures. 
Assume that $T_\Lambda M= E^s\oplus E^u$ with $\dim E^s=1$ and $\dim E^u=2$. We use the following direct consequence of \cite[Corollary~2.18]{BGV} for the cocycle $A=Df\mid_{E^u}$ together with Franks' lemma and, henceforth, is specific of the $C^1$-topology.

\begin{corollary}\label{BGV}\cite[Corollary~2.18]{BGV}
For any $\epsilon>0$ there are two integers $m$ and $n$ such that, for any periodic
point $x$ of period $\pi(x) \geq n$:
\begin{enumerate}
\item either $Df\mid_{E^u}$ admits an $m$-dominated splitting along the orbit of $x$ or else;
\item for any neighborhood $U$ of the orbit of $x$, there exists an $\epsilon$-perturbation $g$ of $f$
in the $C^1$-topology, coinciding with $f$ outside $U$ and on the orbit of $x$, and such that
the differential $Dg^{\pi(x)}(x)\mid_{E^u_x}$ has all eigenvalues real and with the same modulus.
\end{enumerate}
\end{corollary}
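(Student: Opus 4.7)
The plan is to decouple the statement into a linear-algebraic dichotomy for cocycles (the main theorem of \cite{BGV}, applied to the restriction $Df|_{E^u}$) and a realization step via Franks' lemma, which converts a cocycle perturbation into a genuine $C^1$-small perturbation of the diffeomorphism.

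First I would view the restriction $A(y) := Df(y)|_{E^u_y}$ as a periodic sequence of invertible $2\times 2$ matrices along the orbit of $x$, since $\dim E^u = 2$ and $E^u$ is $Df$-invariant on $\Lambda$. The central cocycle dichotomy of \cite{BGV} provides, for any $\epsilon > 0$, integers $m = m(\epsilon)$ and $n = n(\epsilon)$ (depending also on uniform bounds on $\|Df\|$ and $\|Df^{-1}\|$ over $\Lambda$) such that for every periodic orbit of length $\pi(x)\geq n$, one of the following alternatives holds: either the product $A(f^{\pi(x)-1}(x))\cdots A(x)$ together with its iterated sub-products admits an $m$-dominated splitting along the orbit; or there exist $\epsilon$-perturbations $\tilde A_i$ of each $A(f^i(x))$ in the operator norm whose cyclic product has all eigenvalues real and of equal modulus. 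The mechanism in dimension two is a rotation argument in the projective line: the failure of $m$-domination on every block of $m$ iterates means that the singular-value gap cannot be made uniformly large, so small rotations by well-chosen angles inserted at controlled positions along the orbit can be composed to swap the strong and weak directions, producing at the end a repeated real eigenvalue.

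Second I would realize the abstract perturbation $\{\tilde A_i\}$ as an actual diffeomorphism perturbation using Franks' lemma. Extend each $\tilde A_i$ to a full tangent space perturbation by declaring it to agree with $Df(f^i(x))|_{E^s_{f^i(x)}}$ on the stable direction, so that the hyperbolic splitting $E^s \oplus E^u$ is preserved at the orbit points. Franks' lemma then yields a diffeomorphism $g$ with $\|g-f\|_{C^1}$ proportional to $\epsilon$, supported in an arbitrarily small tubular neighborhood of the orbit of $x$, whose derivative at $f^i(x)$ equals $Df(f^i(x))|_{E^s} \oplus \tilde A_i$. By choosing the support inside the prescribed neighborhood $U$ and inside disjoint balls around each iterate $f^i(x)$ of radius smaller than the orbit's minimum self-separation, one guarantees that $g$ equals $f$ outside $U$ and on the orbit itself. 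Then $g^{\pi(x)}(x) = x$, the subspace $E^u_x$ is still invariant under $Dg^{\pi(x)}(x)$, and the restriction equals $\tilde A_{\pi(x)-1}\cdots \tilde A_0$, which has the required spectral form.

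The main obstacle is purely at the linear-algebra level inside the first step: the BGV rotation mechanism requires carefully choosing where and by how much to insert perturbations along the orbit, so that the accumulated rotation realigns the Oseledets-type splitting of the unperturbed product while each individual matrix perturbation remains $\epsilon$-small. This is exactly what the quantitative control in \cite{BGV} achieves, and it is where the constants $m$ and $n$ come from; by contrast, the realization via Franks' lemma and the compatibility with the hyperbolic splitting (using continuity of $E^s, E^u$ under $C^1$-perturbations, which is automatic on a hyperbolic basic set) are routine.
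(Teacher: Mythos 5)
Your proposal is correct and follows essentially the same route as the paper: the authors treat this statement as a direct consequence of \cite[Corollary~2.18]{BGV} applied to the cocycle $A=Df\mid_{E^u}$, combined with Franks' lemma to realize the perturbation of the derivative along the periodic orbit, which is exactly your two-step decomposition (cocycle dichotomy plus realization preserving the hyperbolic splitting).
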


Since all periodic points are homoclinically related, we are reduced to prove first that 
there are (up to a $C^1$-perturbation) two periodic points with robust different signatures. 
We organize our argument in three cases.

\vspace{.2cm}
\noindent\emph{Case~1: $Df^{\pi(p)}(p)\mid_{E^1_p}$ has a complex eigenvalue for every $p\in\text{Per}(f)$}.
\vspace{.1cm}

If this is the case then $\text{sgn}^u(p)=(2,0)$ for every $p\in\text{Per}(f)$.
By Corollary~\ref{BGV}, item (2), there is a diffeomorphism $g$, $C^1$-arbitrarily close to $f$, and 
a periodic point $p$ for which $Dg^{\pi(p)}(p)\mid_{E^1}$ has two real eigenvalues with the same modulus. 
Since this is a non-generic property, by a $C^1$-small perturbation $\tilde g$ of $g$, the continuation 
$p_{\tilde g}$ of the periodic point $p$ is so that $D\tilde g^{\pi(p_{\tilde g})}(p_{\tilde g})\mid_{E^1}$ has real and simple spectrum. Since $\tilde g$ can be obtained displaying some periodic point $q$
in the basic piece such that $D\tilde g^{\pi(q)}(q)\mid_{E^1_q}$ has a complex eigenvalue (hence $\text{sgn}^u(q)=(2,0)$ robustly),
then $\tilde g \in \mathcal{SS}^1_c$.

\vspace{.2cm}
\noindent\emph{Case~2: 
There are $p,q\in Per(f)$ such that $Df^{\pi(p)}(p)\mid_{E^1_p}$ has a complex eigenvalue and 
 $Df^{\pi(q)}(q)\mid_{E^1_q}$ has two real eigenvalues}.
\vspace{.1cm}

If $Df^{\pi(q)}(q)\mid_{E^1_q}$ has simple real spectrum we are done. Otherwise, since there are periodic points whose derivative at the period has a complex eigenvalue
in this case a simple and arbitrary small $C^1$-perturbation $g$ supported in a neighborhood of $q$ in such a way that $Dg^{\pi(q)}(q)\mid_{E^1_q}$ has real simple 
spectrum and, consequently, an equidimensional cycle with robust different signatures.

\vspace{.2cm}
\noindent\emph{Case~3: for all $p\in\text{Per}(f)$ the linear map 
$Df^{\pi(p)}(p)\mid_{E^1_p}$ has only real eigenvalues}.
\vspace{.1cm}

In this case there are three situations to consider, depending on the number of periodic points with two real equal eigenvalues (presenting a nilpotent part or not). \vspace{.1cm}

(a) all periodic points $p\in\text{Per}(f)$ are so that 
$Df^{\pi(p)}(p)\mid_{E^1_p}$ has real simple spectrum (or, in other words, $\text{sgn}^u(p)=(1,1)$).
\vspace{.1cm}
As $f|_{\Lambda}$ cannot be $C^1$-approximated by diffeomophisms $g$ so that $g\mid_{\Lambda_g}$ has a one-dimensional finest dominated splitting, then it follows from Corollary~\ref{tadinho} that $f$ is not
a simple star diffeomorphism. Hence, it can be arbitrarily $C^1$-approximated by a diffeomorphism $g$
so that either there exists a periodic point $p$ such that $Dg^{\pi(p)}(p)\mid_{E^1}$
has one complex eigenvalue or two real equal eigenvalues. Such perturbation can be performed in order to maintain a periodic point with real simple eigenvalues. Moreover, the period of $p$ can be taken as larger as necessary. We are left to prove that a perturbation can be done in order to obtain complex eigenvalues (along $E^1$) for some periodic point in the case when we have two real equal eigenvalues for $Dg^{\pi(p)}(p)\mid_{E^1}$. We will deal with this situation in the next two cases (b) and (c).

(b) if there exists a periodic point $p$ so that $\dim \text{Ker}(Df^{\pi(p)}(p)\mid_{E^1_p}-I)=2$ then 
$Df^{\pi(p)}(p)\mid_{E^1_p}$ has two real eigenvalues larger than one with no nilpotent part. 
Hence, by a small $C^1$-perturbation whose support is contained in a neighborhood of $p$ 
one can obtain a $C^1$-diffeomorphism $g$ so that $Dg^{\pi(p)}(p)\mid_{E^1_p}$ has a complex
eigenvalue (this can be obtained via Franks' lemma and which can be written locally by $Dg=R_\theta 
\circ Df$, where $R_\theta$ represents the rotation of angle $\theta$ on the plane $E^1$ and the identity in $E^2$).

\vspace{.1cm}

(c) if all periodic points $p$ have two real equal eigenvalues and are such that 
 $\dim \text{Ker}(Df^{\pi(p)}(p)\mid_{E^1_p}-I)=1$ we shall borrow an idea from \cite[Section 9]{BoV04}
 and proceed as follows. In this case all  of these periodic points have a nilpotent part and, in a suitable 
 coordinate system, can be written
 by 
 $$
 Df^{\pi(p)}(p)\mid_{E^1_p}  
 	= \begin{pmatrix} \lambda_p & n(p) \\ 0 & \lambda_p \end{pmatrix}
 $$
where $n(p) \in \mathbb R$ denotes the nilpotent part and $\lambda_p >1$.
Actually, since $f$ is Axiom A then it admits a finite Markov partition $\mathcal P=\{P_i\}_i$. 
Let $n_{i,j}\ge 1$ be given such that $f^{n_{i,j}}(P_i)\cap P_j \neq \emptyset$.

Fix $p\in Per(f)$ as above and assume, for simplicity, that $p \in P_1$ is a fixed point. 
Given $n\ge 1$,  let $x_m \in \Lambda$ be a periodic point so that $f^j(x_m) \in P_1$ for every $1\le j \le n$,
$f^{n_{1,2}+n} (x_m) \in P_2$ and $x_m=f^{n_{2,1}+n_{1,2}+n} (x_m) \in P_1$.  For any $m\gg 1$ the 
$f$-invariant compact set $Y_m = \overline{ \bigcup_{n\ge m} \bigcup_{j\in \mathbb Z} f^j(x_m)}$
inherits the behavior of $Df(p)$. For periodic points 
$q \in Y_m$ with period $\pi(q)\ge 1$ one has 
 $$
 Df^{\pi(q)}(q)\mid_{E^1_q}  
 	\approx \begin{pmatrix} \lambda_p^{\pi(q)-\ell} & N(q) \\ 0 & \lambda_p^{\pi(q)-\ell} \end{pmatrix}\cdot A(q,\ell),
 $$
 where $A(q,\ell)$ is a $2\times2$ matrix corresponding to the $\ell=n_{2,1}+n_{1,2}$ iterates of transitions between $P_1$ and $P_2$,
and the nilpotent part $N(q)$ grows linearly with $\pi(q)$. In particular, $N(q)/  \lambda_p^{\pi(q)} \to 0$
as $\pi(q) \to\infty$. Given $\vep>0$ choose $q\in Y_m$ with large period so that $N(q) \le  \lambda_p^{\pi(q)-\ell} \vep$. 

First we perform a $\vep$-$C^1$-small perturbation along a piece of orbit of $q$ so that the resulting diffeomorphism
$g$ satisfies, in appropriate coordinates, 
 $$
 Dg^{\pi(q)}(q)\mid_{E^1_q}  
 	\approx \begin{pmatrix} \lambda_p^{\pi(q)-\ell} & 0 \\ 0 & \lambda_p^{\pi(q)-\ell} \end{pmatrix}\cdot A(q,\ell)=\lambda_p^{\pi(q)-\ell}  A(q,\ell).
	$$ 
If  $A(q,\ell)$ have complex eigenvalues we are done. 
 
Otherwise we notice that $A(q,\ell)=Df^{\ell}(f^{\pi(q)-\ell}(q))$ and that $q \mapsto A(q,\ell)$ is continuous. There exists an appropriate base, given by the Jordan canonical form, so that the matrix $A(q,\ell)$ can be 
written of the form 
$$
A_1:= \begin{pmatrix} a & 0 \\ 0 & b \end{pmatrix}
	\quad\text{or}\quad
A_2:=\begin{pmatrix} a & b \\ 0  & a \end{pmatrix}
$$
for some real numbers $a,b$ (depending on the $\ell$ iterates $$\{ f^{j} (f^{\pi(q)-\ell} (q)) \colon j=0 \dots \ell-1 \}$$ of the orbit of the periodic point $q$). 
By the continuity of the matrix $A(q,\ell)$ with the periodic point $q$ the real numbers $a,b$ can be taken bounded from above and below by uniform
constants independent on the periodic point $q$ and period $\pi(q)$.
The strategy is to perturb the derivative of the diffeomorphism $g$ along a piece of orbit $\{ f^{j}  (q) \colon j=0 \dots N\}$ ($N\ge 1$ to be determined below)
in such a way that the resulting 
$C^1$-diffeomorphism $\tilde g$ is so that  $D\tilde g^{\pi(q)}(q)\mid_{E^1_q}$ has complex eigenvalues. 

Given $\vep>0$ take $N(\vep)\approx \frac1\vep$ (more precisely, $N(\vep)\ge \frac{1}{1+\vep} \log \left|\frac{b}{a}\right|$ in case of matrix 
$A_1$ and $N(\vep)\ge \frac{b}{\vep}$ in case of matrix $A_2$) and take $q \in Y_m$ with period $\pi(q)\ge N(\vep)+\ell$. 
By Franks' lemma, in the case of a matrix of the form $A_1$ then we perturb the $C^1$-diffeomorphism over the piece of orbit $\{ f^{j}  (q) \colon j=0 \dots j=0 \dots N(\vep)-1 \}$ 
by concatenations of perturbations of the derivative to get a $C^1$-diffeomorphism $g$ whose derivative is obtained from the one of $f$ by perturbations ($C^0$-close to the identity) of the form
$$
\begin{pmatrix} 1+\vep & 0 \\ 0 & 1 \end{pmatrix}
$$
in an appropriate base. For the resulting diffeomorphism we get that 
$$
 Dg^{\pi(q)}(q)\mid_{E^1_q} 
 	\approx \lambda_p^{\pi(q)-\ell} \begin{pmatrix} (1+\vep)^{N(\vep)} a & 0 \\ 0 & b \end{pmatrix}
	\approx \lambda_p^{\pi(q)-\ell} \begin{pmatrix} b & 0 \\ 0 & b \end{pmatrix}.
 $$
In the case of a matrix of the form $A_2$ we proceed analogously perturbing the 
derivative of the diffeomorphism $f$ over the piece of orbit $\{ f^{j}  (q) \colon j=0 \dots N(\vep)-1 \}$ by perturbations of the form
$$
\begin{pmatrix} 1 & -\frac{\vep}{a} \\ 0 & 1 \end{pmatrix}
$$
in an appropriate base. Thus we get
$$
 Dg^{\pi(q)}(q)\mid_{E^1_q} 
 	\approx \lambda_p^{\pi(q)-\ell} \begin{pmatrix}  a & b-\vep N(\vep) \\ 0 & a \end{pmatrix}
	\approx \lambda_p^{\pi(q)-\ell} \begin{pmatrix} a & 0 \\ 0 & a \end{pmatrix}.
 $$
Hence there exists a periodic point $\tilde q$ such that $D\tilde g^{\pi(\tilde q)}(\tilde q)\mid_{E^1_{\tilde q}}$ 
has simple real spectrum. This finishes the proof of the theorem.

%%%%%%%%%%%%%%%%%%%%%%%%%%%%
\subsection{Proof of Theorem~\ref{thm:explo}}

Let $f$ be a $C^1$ diffeomorphism and $\Lambda$ be a hyperbolic basic piece. Assume there are two periodic points 
$p,q \in \Lambda$ with robust different unstable signatures. Let $\mathcal{U}$ be a $C^1$-open neighborhood of $f$ so that for any $g\in\mathcal{U}$ the analytic continuation $\Lambda_g$ of $\Lambda$ is well defined and let $p_g$ and $q_g$ denote the continuations of $p$ and $q$, respectively, and $sgn^u(p_g)=sgn^u(p)$ and $sgn^u(q_g)=sgn^u(q)$. 
Since $\dim M =3$ we may assume without loss of generality that $\dim(E^s)=1$ and $\dim(E^u)=2$.

For any $n\in\mathbb{N}$ we define $\mathcal{R}_n$ to be the set of $C^1$ diffeomorphisms $g\in \mathcal{U}$ such that $g$ has $n$ distinct periodic points with robust unstable signature equal to the one of $p_g$ and has $n$ distinct periodic points with robust unstable signature equal to the one of $q_g$.

Observe that, by hypothesis, $\mathcal{R}_1=\mathcal{U}$ and so $\mathcal{R}_1$ is $C^1$-open and dense in $\mathcal{U}$. For any $n\geq 1$ the set $\mathcal{R}_n$ is clearly $C^1$-open. If one shows that each $\mathcal{R}_n$ is $C^1$-dense then the residual set $\mathcal{R}=\cap_n \mathcal{R}_n$ satisfies the statement of Theorem~\ref{thm:explo}. 

Assume that $\mathcal{R}_k$ is $C^1$-dense for any $k=1,...,n$ and fix $\epsilon>0$ and any $g\in\mathcal{R}_n$. We claim that there exists $g_1\in \mathcal{R}_{n+1}$ which is $\epsilon$-$C^1$-close to $g$. The diffeomorphism $g_1$ will be obtained from $g$ by $C^1$-small perturbations at two periodic points in order to obtain one more point of each robust signature. By Corollary~\ref{BGV} we know that there exists $\ell,m\in\mathbb{N}$ such that, for any periodic point $x$ of period $\pi(x) \geq \ell$ either there exists an $m$-dominated splitting along the orbit of $x$ or else for any neighborhood $U$ of the orbit of $x$, there exists an $\epsilon/4$-perturbation $g_1$ of $g$
in the $C^1$-topology, coinciding with $g$ outside $U$ and on the orbit of $x$, and for which the tangent map $(Dg_1)^{\pi(x)}_x|_{E^u_x}$ has a real eigenvalue with multiplicity two.

Assume without loss of generality that $\text{sgn}^u(p_g)=(1,1)$ and $\text{sgn}^u(q_g)=(2)$.
The existence of $q_g$ implies that the set of periodic points $x$ of period larger that $\ell$ and with an $m$-dominated splitting along the unstable fiber is not dense in $\Lambda_g$. Indeed, if this was not the case, then $q_g$ would have an $m$-dominated splitting on $E^u_{q_g}$. By the previous dichotomy it follows that $q_g$ is accumulated by open sets without periodic points displaying an $m$-dominated splitting on $E^u$. Thus we can pick a periodic point $x$ distinct from the $2n$ marked periodic point for $g$ in one of these open sets and with arbitrarily large period $\pi(x)$. Using the dichotomy there exists an $\epsilon/4$-perturbation $g_1$ of $g$
in the $C^1$-topology, coinciding with $g$ outside $U$ and on the orbit of $x$, and for which the tangent map $(Dg_1)^{\pi(x)}_x|_{E^u_x}$ has a real eigenvalue with multiplicity two. Finally, since the periodic point $x$ can be chosen with an arbitrarily large period we can proceed as in Subsection \ref{second} and perform an $\epsilon/4$-perturbation $g_2$ of $g_1$ in the $C^1$-topology so that $(Dg_2)^{\pi(x)}_x|_{E^u_x}$ has a complex eigenvalue. Clearly, $x$ has an unstable robust signature $\text{sgn}^u(x)=(2)$.

If $g_2$ already have $n+1$ distinct periodic points with robust unstable signature equal to the unstable signature of $p_{g}$ we are done. Otherwise, we are left to show that $g_2$ can be $\epsilon/4$-approximated in the $C^1$-topology by $g_3$ exhibiting one more distinct periodic point with robust unstable signature equal to the unstable signature of $p_{g}$. Indeed, if all but $n$ periodic points have unstable signature $(2)$ and since there is no dominated splitting restricted to the unstable fiber and along these orbits we can perform an $\epsilon/4$-approximated in the $C^1$-topology in order to obtain a distinct periodic point $x$ with robust unstable signature equal to $\text{sgn}^u(x)=(1,1)$. This completes the proof of the $C^1$-denseness of 
$\mathcal R_n$ in $\mathcal U$ and finishes the proof of the theorem.

\begin{remark}
The previous construction of infinitely many periodic points of a certain signature has the same flavor of the construction of a generic set with infinite sinks and sources for diffeomorphisms with homoclinic tangencies: the perturbation are localized in regions where there is a lack of domination among the stable and unstable bundles.
\end{remark}

\vspace{.4cm}
\textbf{Acknowledgements:} This work was partially supported by  CMUP (UID/MAT/ 00144/2013), which is funded by FCT (Portugal) with national (MEC) and European structural funds through the programs FEDER, under the partnership agreement PT2020. MB was partially supported by National Funds through FCT  Funda\c{c}\~ao para a Ci\^encia e a Tecnologia, project PEst-OE/MAT/UI0212/2011. PV was partially supported by a CNPq-Brazil. 

%%%%%%%%%%%%%%%%%%%%%%%%%

\end{document}